\newcommand \Ker {{\rm Ker}}
\newcommand \R {{ \mathbb R}}
\newcommand \RR {{ \mathbb R}}
\newcommand {\abs}[1]{\lvert#1\rvert}
\newcommand \C {{ \mathbb C}}
\newcommand \N {{ \mathbb N}}
\newcommand \T {{ \mathbb T}}
\newcommand \E {{ \mathcal E}}
\newcommand \cL {{ \mathcal L}}
\newcommand \g {{ \mathfrak g}}
\renewcommand{\sl}{\operatorname{\mathfrak s\mathfrak l}}
\newcommand \GN {{ \Gamma\setminus N}}
\newcommand \ci{{C^\infty}}
\def \al{ \alpha}
\def \la{\lambda}
\newtheorem {lemma} {Lemma}[section]
\newtheorem {thm} {Theorem}
\newtheorem {prop} {Proposition}[section]
\newtheorem{cor}{Corollary}[section]
\newtheorem{rem}{Remark}
\newtheorem{definition}{Definition}
\newtheorem{conj}{Conjecture}
\title[Globally hypoelliptic abelian actions] %Use the shortened version of the full title
      {On globally hypoelliptic abelian actions and their existence on homogeneous  spaces}
\author[Danijela Damjanovic, James Tanis and Zhenqi Wang]{Danijela Damjanovic, James Tanis and Zhenqi Wang}
 \email{ddam@kth.se}
 \email{jhtanis@mitre.org}
 \email{wangzq@math.msu.edu}
\thanks{The first author is supported by Swedish Research Council  grant   VR-2015-04644. The third author is supported by NSF grant DMS-1346876. Approved for Public Release; Distribution Unlimited. Public Release Case Number 19-2033.\\}
\begin{document}
\maketitle

% Enter the first author's name and address:

\bigskip

% The name of the associate editor will be entered by an editorial staff
% "Communicated by the associate editor name" is not needed for special issue.
% \centerline{(Communicated by the associate editor name)}

%The abstract of your paper
\begin{abstract}
We define globally hypoelliptic smooth $\mathbb R^k$ actions as actions whose leafwise Laplacian along the orbit foliation is a globally hypoelliptic differential operator. When $k=1$, strong global rigidity is conjectured for such actions by Greenfield-Wallach and Katok: every such action is smoothly conjugate to a Diophantine flow on the torus. The conjecture has been confirmed for all homogeneous flows on homogeneous spaces \cite{FFRH}. In this paper we conjecture that among homogeneous $\mathbb R^k$ actions ($k\ge 2$) on homogeneous spaces globally hypoelliptic actions exist only on nilmanifolds. We obtain a partial result towards this conjecture: we show non-existence of globally hypoelliptic $\mathbb R^2$ actions on homogeneous spaces $G/\Gamma$, with at least one quasi-unipotent generator, where $G= SL(n, \mathbb R)$. We also  show that the same type of actions on solvmanifolds are smoothly conjugate to homogeneous actions on nilmanifolds. 
\end{abstract}

%The title of your section 1
%The title of your section 2
\section{Introduction and result}

\subsection{Globally hypoelliptic $\mathbb R^k$ actions}

Let $\rho$ be a smooth  $\R^k$ action by diffeomorphisms of a smooth manifold  $M$ generated by $k$ smooth commuting vector fields $X_1, \dots X_k$.
%We assume that $\rho$ preserves a smooth volume form $\omega$ on $M$.
We use the same notation $X_i$ to denote the Lie derivative  $\cL_{X_i}$ associated to the vector field $X_i$, $i=1, \dots, k$.  The second order differential operator    $$L_\rho=\sum_{i=1}^kX_i^2$$
is called \emph{the leafwise Laplacian associated to the action $\rho$}. It is the Laplacian operator associated to the orbit foliation of the action $\rho$.

For a differential operator   $L$ on a compact connected smooth manifold $M$ we define now the property of \emph{global hypoellipticity}.  Denote by $\E(M)$ the space $\ci(M)$ of smooth functions and by $\E'(M)$ the space of distributions.
\begin{definition} Operator $L$ is globally hypoelliptic (GH) if every distributional solution $u$  to the equation $Lu=f$  is in $\E(M)$, if $f\in \E(M)$.
\end{definition}

We say that an $\R^k$ action $\rho$ is \emph{globally hypoelliptic} (GH) if its associated leafwise Laplacian $L_\rho$ is globally hypoelliptic.

Examples of GH flows are Diophantine flows on tori. On the  torus $\T^n$ such a flow is given by constant coefficient vector field: $X_\alpha= \sum_{i=1}^{n} \al_i\frac{\partial}{\partial x_i}$,  where vector $\alpha=(\alpha_1, \dots, \alpha_n)$ is a Diophantine vector in $\R^n$. Examples of GH $\R^k$ actions are again actions generated by (simultaneously)  Diophantine vector fields on tori.  For $k= 2$ however, there exist examples of GH actions on 2-step nilmanifolds \cite{CR1, D} (the known examples are listed in Section \ref{examples}). It is expected that  for $k> 2$ there are examples on (higher step) nilmanifolds as well, although such examples are not yet known.  All these examples are \emph{homogeneous actions} (i.e. actions by multiplication by group elements) on homogeneous space of the form $G/\Gamma$ where $G$ is a Lie group and $\Gamma$ is a co-compact lattice in $G$.  It was found already by Greenfield and Wallach in the 70's that GH property for a smooth flow on a compact manifold is a very restrictive property. In particular, it implies that the flow is conservative (i.e. it preserves a smooth nowhere vanishing volume form) and moreover, the space of the flow invariant distributions is 1-dimensional \cite{GW2}. 
%(Non-conservative flows exist in abundance and  can be obtained by taking small perturbations of conservative ones). 
This gives strong dynamical restrictions on GH flows:  any GH flow is uniquely ergodic and minimal.  These conclusions from \cite{GW2} generalise in a straightforward way to general $\R^k$ GH actions (see Proposition \ref{trivial_kernel}). In particular,  every GH action  of  $\R^k$ is conservative, uniquely ergodic and minimal.  Already these properties restrict significantly the dynamics of GH actions.  In the case of $\R$ actions, the GH property is  actually extremely strong in the sense that  conjecturally it implies very strong global rigidity.  Namely, Greenfield and Wallach in \cite{GW2} as well as Katok in \cite{K} conjectured: \color{black}

\begin{conj}\label{GWK}[Greenfield-Wallach, A. Katok]
Any  globally hypoelliptic  $\R$ action on a smooth connected compact manifold $M$ is smoothly conjugated to a Diophantine flow on the torus.
\end{conj}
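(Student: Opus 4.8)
The plan is to reduce the statement to the spectral theory of the generating vector field $X=X_1$ and then to upgrade the resulting spectral data to a smooth geometric model. By the extension of the Greenfield--Wallach analysis to $\R^k$ actions (Proposition \ref{trivial_kernel}), a GH flow is conservative, uniquely ergodic and minimal; fix the invariant smooth volume $\mu$, so that $X$ is skew-adjoint on $L^2(M,\mu)$ and its spectrum is purely imaginary. The first and most substantial step is to use global hypoellipticity to establish that $X$ has \emph{pure point spectrum}: the GH solvability of the cohomological equation $Xu=f$ should force the spectral measure of $X$ to be atomic and, combined with the regularity statement, make every eigendistribution smooth, so that $L^2(M,\mu)$ decomposes as an orthogonal sum of eigenspaces $E_\la=\{f:Xf=i\la f\}$, $\la\in\R$, spanned by smooth eigenfunctions. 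This is in the spirit of the structural analysis of \cite{GW2}, and it replaces the dynamics by the list of frequencies $\la$.

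Next I would extract the algebraic structure of the frequencies using minimality. For a minimal flow any $L^2$-eigenfunction has constant modulus, since $|\phi|^2$ is flow-invariant, hence a.e.\ constant by ergodicity and everywhere constant and nonzero by GH smoothness; normalizing to $|\phi_\la|\equiv 1$, each $E_\la$ is one-dimensional, as a quotient of two such eigenfunctions is flow-invariant, hence constant. Because $\phi_\la\phi_\mu$ is a smooth unit-modulus eigenfunction for the frequency $\la+\mu$ and $\overline{\phi_\la}$ for $-\la$, the set $\Lambda=\{\la:E_\la\neq 0\}$ is an additive subgroup of $\R$. The family $(\phi_\la)_{\la\in\Lambda}$ then defines a flow-equivariant smooth map $\Phi:M\to\widehat\Lambda$ into the Pontryagin dual of $\Lambda$, intertwining the flow with translation by the character $\la\mapsto e^{it\la}$.

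The remaining steps are to show that $\Lambda$ is finitely generated --- being torsion-free it is then $\cong\mathbb Z^{\,n}$, so $\widehat\Lambda\cong\T^n$ --- and that $\Phi$ is a smooth diffeomorphism rather than merely a measurable isomorphism in the sense of Halmos--von Neumann. Granting this, the flow becomes the linear flow $x\mapsto x+t\al$ on $\T^n$, where $\al=(\al_1,\dots,\al_n)$ records a basis of generators of $\Lambda$; the eigenvalues are $\la_k=\langle k,\al\rangle$ for $k\in\mathbb Z^{\,n}$, and solving $Xu=f$ amounts to dividing the $k$-th Fourier coefficient of $f$ by $i\langle k,\al\rangle$. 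Global hypoellipticity of this last operation --- smoothness of $u$ for every smooth mean-zero $f$ --- is exactly equivalent to the Diophantine lower bound $|\langle k,\al\rangle|\ge C|k|^{-\tau}$, which gives the desired conclusion.

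The main obstacle, and the reason the statement is still only conjectural in general, is precisely the last pair of points: passing from the functional-analytic/measurable picture (pure point spectrum, a group-rotation model) to a \emph{smooth} conjugacy on the finite-dimensional manifold $M$, and in particular ruling out an infinitely generated $\Lambda$ whose dual $\widehat\Lambda$ would be a solenoid rather than a torus. Controlling how the GH a priori estimates interact with the differentiable structure --- so that finitely many eigenfunctions separate points and have differentials spanning $T^*M$ --- is the hard part; in the homogeneous setting treated in \cite{FFRH} this finiteness and regularity are supplied by the representation theory of $G$, which is what makes the conjecture accessible there.
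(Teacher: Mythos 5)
The statement you are addressing is Conjecture \ref{GWK}: the paper does not prove it, and explicitly records it as open except in special cases (the torus, $\dim M\le 3$, and homogeneous flows on finite volume homogeneous spaces \cite{FFRH}). So there is no proof in the paper to compare against, and what you have written is an outline of an attack rather than a proof --- as you yourself concede in your final paragraph.

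Two of your steps are genuine gaps, not technicalities. First, the opening claim that global hypoellipticity forces $X$ to have pure point spectrum on $L^2(M,\mu)$ with smooth eigenfunctions is asserted with ``should force'' and no argument. What GH actually yields (Proposition \ref{trivial_kernel}, together with \cite{GW2} and the equivalence with $\dim H^1_\rho(\ci(M))=1$ from \cite{CC}) is unique ergodicity, minimality, and one-dimensionality of the space of invariant distributions; none of these is known to imply atomicity of the spectral measure of $X$, and proving pure point spectrum for a cohomology-free flow is itself an open problem of essentially the same depth as the conjecture. Second, even granting pure point spectrum, you must show that the eigenvalue group $\Lambda$ is finitely generated and that the Halmos--von Neumann equivariant map $\Phi:M\to\widehat\Lambda$ is a smooth diffeomorphism onto a torus rather than a measurable isomorphism onto a solenoid; you correctly identify this as the hard part, but identifying it does not close it. For comparison, the partial results the paper cites take a different route: they exploit positivity of the first Betti number to build a smooth equivariant fibration of $M$ over a Diophantine linear flow on $\T^{b_1(M)}$ and then work to show the fibre is a point (\cite{F}, \cite{Kocsard}, \cite{RHRH}), while in the homogeneous case \cite{FFRH} the required finiteness and regularity are supplied by representation theory. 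As written, your argument establishes nothing beyond what is already contained in Proposition \ref{trivial_kernel} and its corollary.
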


This conjecture is proved in several special situations:  when $M$ is the torus (see \cite{Kocsard} and references therein); when $\rm dim(M)\le 3$ \cite{F}, \cite{Kocsard}; and when $M$ is a homogeneous finite volume space $G/D$ (where $G$ is a connected Lie group and $D$ a closed subgroup of $G$ such that $G/D$ has a finite $G$-invariant smooth measure)  and $\rho$ is a homogeneous flow \cite{FFRH}. Other advances in the direction of Conjecture \ref{GWK}  on general manifolds are \cite{RHRH}, \cite{FP}.

In this paper we raise a question concerning existence of globally hypoelliptic  $\R^k$ actions when $k\ge 2$. Even though in this case the orbit foliation of the action has dimension greater than 1, and the  minimality of orbit foliation should significantly restrict the class of manifolds where such actions can exist, still the implications of GH property to cohomology over the GH group actions are significantly stronger in the case of flows than in the case of $\R^k$ actions where $k\ge 2$ (see Section \ref{coh-discussion}).  This is the main reason that in the case of $\R^k$ actions when $k\ge 2$ we state the following conjecture only in the homogeneous set-up:

\begin{conj}\label{conj_HR} Let $\rho$ be a homogeneous  $\R^k$ action on a finite volume space $G/D$, where $G$ is a connected Lie group and $D$ a closed subgroup in $G$. If $\rho$ is globally hypoelliptic then it is smoothly conjugate to a homogeneous action on
an $r$-step nilmanifold where $k\ge r$.
\end{conj}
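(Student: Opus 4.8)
The plan is to attack Conjecture \ref{conj_HR} through the Levi decomposition of $G$ together with a representation-theoretic analysis of the leafwise Laplacian, reducing to two extreme cases — semisimple and solvable — and then combining them. Write $G = S \ltimes R$ with $S$ semisimple and $R$ the solvable radical, and let $\mathfrak a \subset \mathfrak g$ be the abelian subalgebra spanned by the commuting generators $X_1, \dots, X_k$. First I would decompose $L^2(G/D)$ (in the lattice case $L^2(G/\Gamma)$) into irreducible unitary $G$-representations and observe that on each irreducible $\pi$ the operator $L_\rho$ acts as $\sum_i d\pi(X_i)^2$, a nonpositive self-adjoint operator. By Proposition \ref{trivial_kernel} the GH hypothesis forces the space of $L_\rho$-invariant distributions to be one-dimensional and the action to be uniquely ergodic and minimal; equivalently, on the orthogonal complement of the constants one must have a tame Sobolev estimate $\|u\|_s \le C_s \|L_\rho u\|_{s+\tau}$ with a fixed loss $\tau$. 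The strategy is to show that such a uniform estimate can survive only when the action is, up to smooth conjugacy, built from finite-dimensional (toral and nilpotent) pieces — which for homogeneous spaces is exactly the nilmanifold situation, with the step $r$ constrained by the number of available generators.

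For the semisimple part the aim is to prove that a nontrivial component of $\mathfrak a$ inside $\mathfrak s$ destroys the tame estimate, and here I would split by the Jordan type of the generators. When one generator is quasi-unipotent — the case treated for $G = SL(n,\R)$ — the corresponding one-parameter flow behaves like a unipotent (horocyclic) flow, and the principal and discrete series components of $L^2(G/\Gamma)$ carry infinitely many invariant distributions of only polynomial Sobolev growth. I would exploit these to manufacture, for suitable $f \in \E(G/\Gamma)$, distributional solutions of $L_\rho u = f$ that fail to be smooth, or, equivalently, a sequence of high-frequency approximate null vectors on which $L_\rho$ is anomalously small; either route contradicts the tame estimate and hence GH. The quantitative input is a lower bound for the bottom of the spectrum of $\sum_i d\pi(X_i)^2$ in terms of the Casimir eigenvalue of $\pi$, a bound that degrades precisely because $d\pi(X_1)$ has spectrum accumulating at zero in the unipotent direction.

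For the solvable part the goal is the reduction statement: a homogeneous action with no semisimple component, living on a solvmanifold, should be smoothly conjugate to a homogeneous action on a nilmanifold. I would use the fibration of a solvmanifold over a torus with nilmanifold fibers, together with the fact that GH is inherited by and detected on these quotients, to strip away the non-nilpotent (exponentially expanding or rotational) part of $R$; minimality and unique ergodicity from Proposition \ref{trivial_kernel} force the surviving toral directions to be Diophantine, and the H\"ormander bracket-generating condition required for $L_\rho$ to be even locally hypoelliptic pins down the step $r$ of the nilpotent structure, yielding $k \ge r$.

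The main obstacle is the semisimple case in full generality. When no generator is quasi-unipotent — for instance when $\mathfrak a \cap \mathfrak s$ lies in a Cartan subalgebra and the induced flows are $\R$-diagonalizable (partially hyperbolic) or elliptic — the mechanism above for producing invariant distributions is unavailable, and one must instead rule out GH through exponential-mixing and cohomological-rigidity estimates that are uniform over the entire unitary dual of a possibly higher-rank group $S$. Controlling the bottom of the spectrum of $\sum_i d\pi(X_i)^2$ simultaneously across principal, complementary, and discrete series of an arbitrary semisimple $G$, and then reassembling the semisimple and solvable analyses across a genuinely non-split extension $S \ltimes R$, is the difficulty I expect to be decisive; it is what confines a rigorous argument to the $SL(n,\R)$ quasi-unipotent case, together with the solvmanifold reduction, rather than the full conjecture.
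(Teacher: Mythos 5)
The statement you are proving is Conjecture \ref{conj_HR}; the paper does not prove it, and offers only two partial results (Theorems \ref{main} and \ref{main2}) together with a reduction scheme (Section \ref{reduction}, Proposition \ref{reductionprop}). Your overall architecture --- Levi decomposition, projection to the maximal semisimple and maximal solvable quotients, and production of non-smooth invariant distributions to contradict the one-dimensionality of $\cap_i \Ker X_i$ from Proposition \ref{trivial_kernel} --- is exactly the paper's strategy, and you are right to flag the general semisimple case (no quasi-unipotent generator, arbitrary semisimple $G$) as the decisive open obstacle. So as a roadmap your proposal is faithful; as a proof it has the same gap the paper has, namely that the partially hyperbolic/elliptic semisimple case and the solvable case without a quasi-unipotent ergodic generator are not handled, and no amount of ``uniform exponential-mixing estimates over the unitary dual'' is actually supplied.

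Two specific steps in your sketch would fail as written. First, you derive $k\ge r$ from ``the H\"ormander bracket-generating condition required for $L_\rho$ to be even locally hypoelliptic.'' This is not the right mechanism: $L_\rho$ is a leafwise Laplacian along a $k$-dimensional foliation and is never locally hypoelliptic transverse to the orbits when $k<\dim M$; global hypoellipticity does not presuppose local hypoellipticity, and H\"ormander's condition plays no role here. In the paper the inequality $k\ge r$ comes from condition (ii) of the Cygan--Richardson Conjecture \ref{conjCR} (non-vanishing of integral functionals on $\mathfrak N_j/\mathfrak N_{j+1}$ against $\mathfrak L\cap\mathfrak N_j$), which is itself only established under hypotheses on coadjoint orbits --- so even this step of the conjecture is conditional. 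Second, you assert that GH is ``equivalent'' to a tame Sobolev estimate $\|u\|_s\le C_s\|L_\rho u\|_{s+\tau}$ on the complement of the constants. For flows an equivalence of this cohomological flavour is the Chen--Chi theorem, but the paper explicitly warns (Remark \ref{WCF} and Section \ref{coh-discussion}, via the Weyl chamber flow) that for $k\ge 2$ such tameness/cohomology conditions are strictly weaker than GH; the only implication available, and the only one the paper uses, is that GH forces distributional unique ergodicity. Your argument should therefore be organized around exhibiting a single non-smooth invariant distribution (as the paper does via Flaminio--Forni distributions, Mackey theory for the relevant solvable subgroups, and explicit dual models), not around refuting a tame estimate.
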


 In Section \ref{nilmanifold}
we comment in more detail the relation $k\ge r$ between the rank of the action and the step of the nilmanifold in the conjecture above.

\begin{rem}\label{WCF}We note here that there has been another notion of global hypoellipticity which has been considered in the literature for a collection of smooth vector fields (see for example \cite{CR, CR1}). Namely, a system of  vector fields $\{X_1,  \dots, X_k\}$ is said to be globally hypoelliptic (GH) if whenever the system of equations $X_1u=f_1, \dots, X_ku=f_k$ has a distributional solution $u$ for $C^\infty$ functions $f_1, \dots, f_k$, then $u$ is also $C^\infty$.

It can be observed directly that if $L_\rho$ is GH, then the system of vector fields is also GH: existence of a distributional solution $u$ for $X_1u=f_1, \dots, X_ku=f_k$  implies that $u$ solves the equation $L_\rho u= \sum_{i=1}^kX_if_i$, where the right hand side is a smooth function, so by GH of $L_\rho$  we conclude $u$ is smooth.

The converse however does not hold, namely if the system of vector fields $\{X_1,  \dots, X_k\}$ is GH, the leafwise Laplacian of the action $\rho$ need not be GH. One example of this situation is action by left multiplication by the abelian group of diagonal matrices in $SL(3, \mathbb R)$ on the manifold $SL(3, \mathbb R)/ \Gamma$ where $\Gamma$ is a uniform lattice in $SL(3, \mathbb R)$. This action is called the Weyl chamber flow, and it follows from the rigidity result in  \cite{KS0} that its generating system of vector fields is GH. However, this action is not minimal (closed non-trivial  invariant sets are discussed in \cite{LW} for example) and therefore its leafwise laplacian is not GH, so the action is not GH according to our definition.

For $\R$ actions, that is for flows, the notions of GH of the leafwise Laplacian and GH of the generating vector field are clearly equivalent. However, for $\R^k$ actions, as the example above shows, there is quite a distinction between these two notions.
We do not expect  any global rigidity to hold for $\R^k$ actions which satisfy this significantly  weaker condition, that the system of generating vector fields  is GH.
\end{rem}

\color{black}

\subsection{Main results}

In this paper we prove Conjecture \ref{conj_HR} in case of certain class of homogeneous spaces and certain type of actions.

For any generator $X$ of the given $\mathbb R^k$-action $\rho$ on $G/D$ let $ad(X)$ be the induced operator on the Lie algebra $\frak g$ of $G$. Let $\frak g^\lambda$ denote the generalized eigenspaces for the action of $ad(X)$ on $\frak g$ . Then $\frak g$ is the direct sum of $\frak g^\lambda$s. We call the generator $X$ \emph{quasi-unipotent} if all $\lambda$ lie on the unit circle and we call it \emph{partially hyperbolic} otherwise.

\begin{thm}\label{main}
Let $G/D$ be a finite volume space, where $G= SL(n, \R)$ and $D$ is a closed subgroup in $G$. Let $\rho$ be  an $\R^2$ action of a subgroup $H<G$ by left multiplication such that $\rho$  has a quasi-unipotent generator. Then $\rho$ is not globally hypoelliptic.
 \end{thm}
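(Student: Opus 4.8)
The plan is to show that a globally hypoelliptic $\R^2$ action on $SL(n,\R)/D$ with a quasi-unipotent generator forces the existence of a non-trivial invariant distribution, contradicting Proposition \ref{trivial_kernel} (which asserts GH implies the invariant distribution space is one-dimensional). The strategy is to exploit the quasi-unipotent generator to produce ``extra'' invariant distributions via representation theory, thereby violating unique ergodicity / triviality of the kernel.

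First I would set up the representation-theoretic framework. Since $G/D$ has finite volume, the regular representation of $G$ on $L^2(G/D)$ decomposes into irreducible unitary representations of $SL(n,\R)$. The action $\rho$ is by a two-parameter subgroup $H < G$ generated by commuting elements $X, Y \in \g = \sl(n,\R)$, with (say) $X$ quasi-unipotent. The leafwise Laplacian is $L_\rho = X^2 + Y^2$, acting fiberwise in each irreducible component. The key point is that global hypoellipticity of $L_\rho$ must fail if we can find, in some irreducible component, a distributional solution $u$ to $L_\rho u = f$ with $f$ smooth but $u$ not smooth — or, more cleanly, if the invariant distribution space is larger than expected.

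Next I would analyze the structure of $H$ using the quasi-unipotency hypothesis. Since $X$ is quasi-unipotent in $\sl(n,\R)$, all eigenvalues of $ad(X)$ lie on the unit circle, which (for a real semisimple algebra, where $ad$ has real spectrum on the hyperbolic part) forces $ad(X)$ to be nilpotent plus an elliptic part; crucially $X$ generates no hyperbolic directions. Together with $[X,Y]=0$, this constrains $Y$ as well. I expect two regimes: either $Y$ is also quasi-unipotent, in which case the whole action is quasi-unipotent and one should be able to embed it (after conjugation) into a unipotent or nilpotent flow where explicit invariant distributions are known to be abundant; or $Y$ is partially hyperbolic, in which case the centralizer structure in $\sl(n,\R)$ severely restricts the pair $(X,Y)$. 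The main tool here is the classification of abelian subalgebras containing a quasi-unipotent element and the decomposition $\g = \bigoplus_\lambda \g^\lambda$ relative to $ad(X)$.

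The crux is then an estimate in the unitary representations. In each nontrivial irreducible component $\pi$, I would try to show that the operator $\pi(L_\rho) = \pi(X)^2 + \pi(Y)^2$ either has non-trivial kernel in the space of distribution vectors, or fails to have a smoothing inverse, because the quasi-unipotent direction contributes only polynomial (not exponential) decay of matrix coefficients and hence insufficient Sobolev gain. Concretely, for unipotent directions the cohomological equation $Xu=f$ admits distributional but non-smooth solutions with tame loss, and these obstruct the regularity that GH demands; one leverages known non-vanishing invariant distributions for unipotent flows. The main obstacle I anticipate is handling the mixed case where $ad(X)$ has a genuine elliptic (rotational) part together with a nilpotent part, and the partially hyperbolic $Y$ interacts with it: here one must quantify how the hyperbolicity of $Y$ (which alone would give good smoothing) competes against the quasi-unipotent degeneracy of $X$, and show the degeneracy wins along some sequence of $K$-types or some invariant subspace, producing the required non-smooth solution. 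Controlling this balance uniformly across all irreducible components — including the continuous spectrum and the handling of $D$ not necessarily a lattice — is where the real work lies.
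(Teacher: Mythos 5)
Your overall strategy --- decompose $L^2(G/D)$ into irreducible unitary representations and produce a non-smooth $\rho$-invariant distribution (or a non-smooth distributional solution of $L_\rho u=f$ with $f$ smooth), contradicting Proposition \ref{trivial_kernel} --- is exactly the strategy of the paper, as is the split into the two cases (both generators quasi-unipotent versus one partially hyperbolic and one quasi-unipotent). But the proposal stops precisely where the proof actually lives. The first concrete gap is the reduction step: the paper does not invoke an abstract ``classification of abelian subalgebras containing a quasi-unipotent element''; it proves, by an explicit root-space computation with the Jacobi identity, that a commuting pair of unipotents in $\sl(n,\R)$, $n\ge 4$, embeds either in $(SL(2,\R)\ltimes\R^2)\ltimes\R^3$ or in $SL(2,\R)\times SL(2,\R)$, and that a commuting diagonal--unipotent pair embeds in $SL(d,\R)\times SL(2,\R)$. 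Without some such concrete reduction there is no model in which to compute, and the representation-theoretic analysis cannot start.

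Second, and more importantly, your picture of the mixed case is misdirected. You propose to show that the quasi-unipotent ``degeneracy wins'' against the ``good smoothing'' of the partially hyperbolic generator along some sequence of $K$-types. The paper does nothing of the sort: the partially hyperbolic generator $X$ is not an obstacle to be overcome but is itself the source of an invariant distribution. One sets $\mathcal{D}_v(\omega)=\int_\R\langle\pi(tX)v,\omega\rangle\,dt$, which converges to a finite-regularity distribution precisely \emph{because} of the exponential decay of matrix coefficients (Theorem \ref{th:2}), is annihilated by $X$ by the fundamental theorem of calculus, and is shown to be non-zero on some $v$ via Mackey theory for the subgroup generated by $X$ and a contracted root vector, together with a Mellin-transform obstruction (Lemma \ref{le:3}). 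Tensoring $\mathcal{D}_v$ with a Flaminio--Forni $U$-invariant distribution on the $SL(2,\R)$ factor gives a common invariant, non-smooth distribution for both generators, and Remark \ref{smooth} finishes. In the doubly quasi-unipotent case the paper likewise needs explicit constructions: a tensor product of two Flaminio--Forni distributions with a Sobolev-regularity estimate for the tensor product in the $SL(2,\R)\times SL(2,\R)$ case, and an explicit non-smooth $L^2$ solution of $L_\rho f=p$ in the dual model in the semidirect-product case. None of these is routine, and ``insufficient Sobolev gain from polynomial decay'' does not by itself produce the required distribution; the proposal identifies the right target but leaves the proof essentially unwritten.
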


 The following is essentially a  corollary of results in \cite{FFRH}.

\begin{thm}\label{main2}
Let $R/D$ be a finite volume space, where $R$ is a solvable Lie group and $D$ a closed subgroup in $R$.  Let  $\rho$ be  an $\R^k$ action of a subgroup $H<R$ by left multiplication such that $\rho$  has an ergodic quasi-unipotent generator. Then $\rho$ is smoothly conjugate to a homogeneous action on a nilmanifold.
 \end{thm}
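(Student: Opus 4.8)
The plan is to reduce the statement to the structure theory of quasi-unipotent homogeneous flows from \cite{FFRH}, applied to the single ergodic generator, and then to propagate the resulting nilmanifold structure to the whole commuting family.

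First I would isolate the distinguished generator: writing $H=\exp(\mathfrak h)$ with $\mathfrak h\cong\R^k$, let $X\in\mathfrak h$ be the ergodic quasi-unipotent generator, so that $\phi_t=\exp(tX)$ is an ergodic homogeneous flow on $R/D$. Since $R$ is solvable and $R/D$ carries a finite invariant volume, I am already in the ambient setting of \cite{FFRH} (connected $R$, closed $D$, finite volume), and by standard structure theory of solvmanifolds I may reduce to the case where $D$ is a lattice and $R/D$ is compact. I would also record the elementary remark that ergodicity of $\phi_t$ makes the entire $\R^k$-action ergodic, since any $\rho$-invariant set is in particular $\phi_t$-invariant.

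The heart of the argument is the following structural input, which I would extract from \cite{FFRH}: an ergodic quasi-unipotent homogeneous flow on a finite-volume solvmanifold is smoothly conjugate to a minimal nilflow on a nilmanifold. Granting this, I obtain a diffeomorphism $\Phi\colon R/D\to N/\Lambda$ onto a nilmanifold carrying $\phi_t$ to a one-parameter nilpotent translation flow $\psi_t$. The mechanism I expect to use is the Jordan decomposition $\mathrm{ad}(X)=S+U$ into commuting semisimple and nilpotent parts, where $S$ generates an elliptic (relatively compact) one-parameter group. The existence of a lattice in the solvable group $R$ rigidifies the rotation generated by $S$: a dynamically nontrivial $S$ would exhibit $\phi_t$ as a suspension whose return map is a finite-order, hence non-ergodic, affine map of a toral factor, so ergodicity annihilates the elliptic part and leaves a unipotent flow whose dense orbits realize $R/D$ itself as the nilmanifold $N/\Lambda$. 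Crucially, this identification is affine --- induced by the homogeneous structure rather than merely measurable --- which is what I need for the remaining directions.

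Finally I would transport the rest of the action. The flows $\exp(tX_2),\dots,\exp(tX_k)$ commute with $\phi_t$, so under the affine conjugacy $\Phi$ they pass to commuting affine flows of $N/\Lambda$ that commute with the minimal nilflow $\psi_t$; together with $\psi_t$ they assemble into a homogeneous $\R^k$-action smoothly conjugate to $\rho$, as claimed. I expect the main obstacle to be exactly the middle step: pinning down the statement of \cite{FFRH} in the smooth-conjugacy, solvable-ambient form above, verifying in full generality --- not just in the model semidirect-product examples --- that ergodicity forces the semisimple part of the quasi-unipotent generator to be dynamically trivial, and checking that the residual automorphism part of each transported generator is trivial because it commutes with a minimal translation flow, so that $\Phi$ can be taken affine and simultaneously linearizes every generator of $\rho$.
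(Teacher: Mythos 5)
Your proposal is correct and follows the same overall skeleton as the paper's proof --- reduce to $D$ discrete and $R/D$ compact (Mostow), invoke the structure theory of \cite{FFRH} to conjugate the distinguished ergodic quasi-unipotent flow to a nilflow, then carry the remaining commuting generators across --- but the two arguments organize the middle and final steps differently. The paper does not work with the Jordan decomposition of the single generator at all; instead it first applies Lemma~4.10 of \cite{FFRH} to conclude that the existence of an ergodic quasi-unipotent flow forces $R$ to be of class (I) (every $\mathrm{Ad}(g)$ has spectrum on the unit circle), which immediately yields that \emph{all} generators of $\rho$ are quasi-unipotent, and then quotes Theorem~4.9 of \cite{FFRH} (with \cite{S}, \cite{A}): any ergodic homogeneous flow on a class (I) compact solvmanifold is smoothly conjugate to a homogeneous flow on a compact nilmanifold. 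The decisive point, which your proposal has to replace by a separate argument, is that the target nilmanifold and the conjugacy are intrinsic to the class (I) solvmanifold $R/D$ and do not depend on which ergodic flow one starts from; hence the single conjugacy automatically carries the entire $\R^k$-action to a homogeneous action, with only the short remark that a flow commuting with a Diophantine nilflow is itself homogeneous. Your route --- building the conjugacy from one flow and then checking that the transported generators are affine with trivial automorphism part because they commute with a minimal translation flow --- can be made to work and ends at the same final commutation argument, but it puts more weight on the step you yourself flag as the obstacle; the class (I) reduction is the cleaner way to discharge it, and your suspension/elliptic-part heuristic for why ergodicity trivializes the semisimple part is not how \cite{FFRH} actually proceeds, so you would still need to pin that down before the proof is complete.
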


 \begin{rem}
 In Section \ref{reduction} we describe how proving the statement in Conjecture \ref{conj_HR}  reduces to proving a generalisation of Theorem \ref{main} to the case when $G$ is any semisimple Lie group and $\rho$ any $\R^k$ homogeneous action, and a generalisation of Theorem \ref{main2} to the case of any $\R^k$ action.
  \end{rem}

\subsection{Structure of the paper}

In Section \ref{prel} we discuss properties of GH actions, and existing examples on nilmanifolds. We also discuss reduction of the problem in Conjecture \ref{conj_HR} to the cases which contain our main results. In Section \ref{thm_main} we prove Theorem \ref{main} and in Section \ref{thm_main2} we prove Theorem \ref{main2}.

\section{Preliminaries}\label{prel}

\subsection{Properties of GH actions}\label{properties}
 Let $\Ker L=\{D\in \mathcal E'(M): D(Lf)=0 \,\, \mbox{for all}\,\, f\in \E(M)\}$ denote the space of invariant distributions of a differential operator $L$.
 %In what follows we assume that the given $\R^k$ action $\rho$ on a homogeneous space, preserves a nowhere vanishing smooth volume form on $M$.
 %If the action is generated by smooth vector fields $X_1, \dots, X_k$, the volume preservation assumption implies  that $\cap_{i=1}^k \Ker X_i$ contains a smooth volume form.

The following proposition will be crucial for the results of this paper. It shows that global hypoellipticity of the leafwise Laplacian $L_\rho$ for a smooth $\R^k$ action not only implies preservation of nowhere vanishing smooth volume form,  but also very strong \emph{distributional unique ergodicity} i.e. the property that the only action-invariant distribution is the smooth invariant volume form.

\begin{prop} \label{trivial_kernel}  Let $\rho$ be a smooth $\R^k$ action. If $L_\rho$ is GH, then  $\rho$ preserves a nowhere vanishing smooth volume form \color{black} and $\dim(\cap_{i=1}^k \Ker X_i))=1$.
\end{prop}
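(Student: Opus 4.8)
The plan is to carry over to the $\R^k$ setting the flow argument of Greenfield--Wallach \cite{GW2}. Everything revolves around the space of $\rho$-invariant distributions. Note first the elementary identity $\cap_{i=1}^k \Ker X_i = \big(\sum_{i=1}^k X_i\,\E(M)\big)^\perp$ and the elementary inclusion $\cap_{i=1}^k \Ker X_i \subseteq \Ker L_\rho$: if $D(X_i f)=0$ for all $i$ and all $f$, then $D(L_\rho f)=\sum_i D\big(X_i(X_i f)\big)=0$. The content of the proposition is the reverse inclusion together with the dimension count, and both are extracted from global hypoellipticity. As a first step I would establish conservativity, i.e. the existence of a smooth nowhere-vanishing invariant volume $\mu$. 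Amenability of $\R^k$ (Krylov--Bogolyubov) produces an invariant probability measure $\nu$, a nonzero element of $\cap_i \Ker X_i$; upgrading it to a smooth positive density is exactly the conservativity statement proved for flows in \cite{GW2}, the mechanism being that $\Ker L_\rho = \Ker({}^tL_\rho)$ and that a hypoellipticity-type regularity for the transpose promotes such invariant functionals to smooth objects.

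Granting $\mu$, the rest is clean. With respect to $\mu$ each $X_i$ is skew-adjoint, $\int_M (X_i f)\,g\,d\mu=-\int_M f\,(X_i g)\,d\mu$, so $L_\rho$ is formally self-adjoint and ${}^tL_\rho=L_\rho$. Hence every $D\in\Ker L_\rho$ satisfies $L_\rho D=0$ in the sense of distributions; the right-hand side being the smooth function $0$, global hypoellipticity forces $D\in\E(M)$. Identifying distributions with functions via $\mu$, integration by parts gives
\[
0=\int_M (L_\rho D)\,D\,d\mu=-\sum_{i=1}^k\int_M (X_i D)^2\,d\mu,
\]
so $X_iD=0$ for every $i$, i.e. $D\in\cap_i\Ker X_i$. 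This proves $\Ker L_\rho=\cap_i\Ker X_i$ and, crucially, that this common space $V$ consists exactly of the smooth functions killed by all the $X_i$, with the constant function (equivalently $\mu$ itself) among them.

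It remains to show $\dim V=1$. The space $V$ is a subalgebra of $\E(M)$, since $X_i(gh)=(X_ig)h+g(X_ih)=0$ whenever $g,h\in V$. If some real $g\in V$ were non-constant then $\psi(g)\in V$ for every $\psi\in\ci(\R)$, and these span an infinite-dimensional subspace. Thus, once $V=\Ker L_\rho$ is known to be finite-dimensional, $V$ can contain no non-constant function; as $M$ is connected, $V$ reduces to the constants and $\dim V=1$. The needed finite-dimensionality is the last GH input: global hypoellipticity yields an a priori estimate for $L_\rho$ which, through the closed-range/closed-graph machinery on the Fr\'echet--Montel space $\E(M)$ as in \cite{GW2}, makes the range of $L_\rho$ closed of finite codimension, and $\Ker L_\rho=(\operatorname{Im}L_\rho)^\perp$ is dual to the cokernel.

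I expect the main obstacle to be this analytic core---conservativity and finite-dimensionality of the invariant distributions---rather than the bookkeeping. Both genuinely use global hypoellipticity, and both require the small but real care of first passing to the formal transpose ${}^tL_\rho$, to which GH must be applied; this is precisely the step where the flow theory of \cite{GW2} does the essential work. By contrast the features special to $k\ge2$, namely the several commuting fields in the integration-by-parts identity and the algebra structure of the joint invariants, are the ``straightforward'' parts, which is why the whole statement parallels the flow case.
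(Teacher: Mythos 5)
Your skeleton is sound, and the pieces you actually carry out are the same ones the paper (or its references) relies on: the elementary inclusion $\cap_{i=1}^k\Ker X_i\subseteq\Ker L_\rho$, finite--dimensionality of $\Ker L_\rho$ via the Greenfield--Wallach closed-range machinery \cite{GW1}, the existence of an invariant probability measure by amenability of $\R^k$ (the paper uses Markov--Kakutani), and the subalgebra argument forcing $\dim(\cap_i\Ker X_i)=1$ once everything is smooth and finite-dimensional (the paper outsources that last step to \cite{D}, but your version of it is the standard one). The genuine gap is exactly the step you flag as the ``analytic core'' and then do not supply: why the smooth invariant density is \emph{nowhere vanishing}. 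The mechanism you invoke --- regularity of distributional solutions of ${}^tL_\rho D=0$ --- only upgrades the invariant probability measure to a smooth density $h\ge 0$; it says nothing about $h$ having no zeros, and you cannot simply cite the flow case of \cite{GW2}, since the operator here is the leafwise Laplacian of an $\R^k$ action rather than a single vector field, so the conservativity proof must be redone. Positivity is half of the assertion being proved, and it is also used downstream: your identity $\sum_i\int_M(X_iD)^2\,d\mu=0$ only yields $X_iD=0$ on $\operatorname{supp}\mu$.

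The paper closes this gap with a short argument that is absent from your proposal. By GH and \cite{GW1}, $\cap_{i=1}^k\Ker X_i$ is finite-dimensional and is spanned by finitely many \emph{smooth} densities $\mu_1,\dots,\mu_l$ (nonempty by Markov--Kakutani plus GH). Suppose their common zero set $S\subset M$ is nonempty; then $S$ is compact and $\rho$-invariant, so Markov--Kakutani applied to $S$ produces a $\rho$-invariant probability measure supported on $S$. By GH this measure must again be a smooth density lying in the span of $\mu_1,\dots,\mu_l$, hence it vanishes on $S$; but a probability measure supported on $S$ whose density vanishes on $S$ has total mass $0$, a contradiction. Hence some element of the span is a nowhere vanishing invariant volume form. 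You need to reproduce this (or an equivalent) positivity argument; as written, the conservativity claim in your proposal is asserted rather than proved.
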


\begin{proof}
The main goal here is to show existence of a $\rho$ invariant nowhere vanishing smooth volume form. Once this is known, the fact that $\dim(\cap_{i=1}^k \Ker X_i))=1$ has been proved in \cite{D}, following essentially the similar proof in the flow case in \cite{GW2}. When $\rho$ is a GH flow, existence of smooth invariant nowhere vanishing volume form  is proved in \cite{GW2}. Here we apply the same approach as in  \cite{GW2} due to the fact that $\rho$ is GH. Namely, in \cite{GW1} it is shown that if a differential operator $L$ is GH then its distributional kernel $\rm Ker L$ is finite dimensional. Thus, since $\rho$ is GH, we conclude that $\Ker L_{\rho}$ is finite dimensional. It is clear that $\cap_{i=1}^k \Ker X_i$ is contained in $\Ker L_{\rho}$, therefore $\cap_{i=1}^k \Ker X_i$ is finite dimensional, and it is spanned by finitely many smooth volume forms $\mu_1, \dots, \mu_l$, since due to GH property all elements in $\Ker L_{\rho}$ are smooth. The set $\cap_{i=1}^k \Ker X_i$ is not empty because there always exists a $\rho$ invariant probability measure by the Markov-Kakutani theorem for abelian actions \cite[Section 4]{Zimmer}. By GH, this measure has to be smooth. If there is a subset $S$ in $M$ where all $\mu_1, \dots, \mu_l$ vanish,  then this subset is clearly compact and $\rho$-invariant. Therefore, again by the Markov-Kakutani theorem, there exists an invariant probability measure $\mu_S$, which extends trivially to a $\rho$-invariant probability measure on $M$, and therefore has to be  a smooth measure in   $\cap_{i=1}^k \Ker X_i$. But this measure is a probability, thus it cannot lie in the span of the set   $\mu_1, \dots, \mu_l$, which gives a contradiction. Therefore, there is a smooth measure in the span of $\mu_1, \dots, \mu_l$ which is nowhere vanishing, and this is the nowhere vanishing $\rho$-invariant volume form we were looking for.

%The second part of the claim has been proved in \cite{D}, again following essentially the similar proof in the flow case in \cite{GW2}.

%The fact that GH property implies $\dim(\cap_{i=1}^k \Ker X_i))=1$ is proved in \cite[Proposition 3.1]{D}. Here we provide the proof of conservativity, following closely the proof of the corresponding fact for flows in \cite{GW2}
\end{proof}
 \color{black}
%Let $L$ be a differential operator on a compact connected smooth manifold $M$.  Denote by $\E(M)$ the space $\ci(M)$ of smooth functions and by $\E'(M)$ the space of distributions. Let $\Ker L=\{D\in \mathcal E'(M): D(Lf)=0 \,\, \mbox{for all}\,\, f\in \E(M)\}$. In what follows we assume that $\Ker L$ contains a nowhere vanishing smooth volume form on $M$.

%\begin{definition} Operator $L$ is globally hypoelliptic (GH) if every distributional solution $u$  to the equation $Lu=f$ where $f\in \E(M)$, is in $\E(M)$.
%\end{definition}

Direct consequence of Proposition \ref{trivial_kernel} is:
\begin{cor}\label{unique_ergodicity}
If $L_\rho$ is GH, then $\rho$ is uniquely ergodic and minimal $\mathbb R^k$ action.
\end{cor}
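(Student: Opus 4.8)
The plan is to deduce both conclusions directly from Proposition \ref{trivial_kernel}, the key observation being that \emph{invariant probability measures are a special case of invariant distributions}. Concretely, I would first note that if $\mu$ is any $\rho$-invariant Borel probability measure on $M$, then invariance under the flow of each generator $X_i$, after differentiating in the time parameter, gives $\int_M (X_i f)\, d\mu = 0$ for every $f\in\E(M)$; that is, $\mu$, viewed as an element of $\E'(M)$, lies in $\Ker X_i$. Hence every $\rho$-invariant probability measure belongs to $\cap_{i=1}^k \Ker X_i$.

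By Proposition \ref{trivial_kernel} the space $\cap_{i=1}^k \Ker X_i$ is one-dimensional and is spanned by a \emph{nowhere vanishing} smooth volume form $\mu_0$. Thus every invariant probability measure is a scalar multiple of $\mu_0$; since $\mu_0$ is nowhere vanishing, its total mass is a fixed nonzero number, and normalising to total mass one singles out a unique measure. This is precisely \emph{unique ergodicity}: there is exactly one $\rho$-invariant probability measure, namely the normalisation of $\mu_0$, and in particular it has full support because $\mu_0$ vanishes nowhere.

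For minimality I would argue by contradiction, using the full support just established. If $\rho$ were not minimal there would be a proper nonempty closed $\rho$-invariant set $C\subsetneq M$. Restricting the action to the compact invariant set $C$ and applying the Markov--Kakutani theorem for abelian actions exactly as in the proof of Proposition \ref{trivial_kernel}, I obtain a $\rho$-invariant probability measure $\nu$ with $\operatorname{supp}(\nu)\subseteq C$. By the unique ergodicity just proved, $\nu$ must equal the normalisation of $\mu_0$, whose support is all of $M$. This forces $M=\operatorname{supp}(\nu)\subseteq C$, contradicting $C\subsetneq M$. Hence $\rho$ is minimal.

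The only genuinely non-routine point is the identification of invariant measures with elements of the distributional kernel $\cap_{i=1}^k \Ker X_i$, together with the observation that the spanning volume form $\mu_0$ is nowhere vanishing, i.e. has full support. Once this full-support property is in hand, both unique ergodicity and minimality follow formally, so there is no serious obstacle beyond invoking Proposition \ref{trivial_kernel} correctly.
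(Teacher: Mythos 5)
Your proof is correct and is exactly the derivation the paper leaves implicit when it calls the corollary a ``direct consequence'' of Proposition \ref{trivial_kernel}: invariant probability measures sit inside $\cap_{i=1}^k \Ker X_i$, the one-dimensionality and nowhere-vanishing of the spanning volume form give unique ergodicity with full support, and Markov--Kakutani applied to a hypothetical proper closed invariant set yields minimality. No gaps.
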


\begin{rem}\label{smooth}
In particular, Proposition \ref{trivial_kernel} implies that every action invariant distribution must be smooth. We will use this fact in the upcoming proofs.
\end{rem}

\begin{rem} Group actions whose distributional kernel contains only an ergodic invariant probability measure are labelled {distributionally uniquely ergodic (DUE)} in \cite{AFK}, where diffeomorphisms with this property were obtained on manifolds different than tori. This indicates that DUE condition is significantly weaker than GH.
\end{rem}

\subsection{Examples of $\mathbb R^k$ globally hypoelliptic actions}\label{examples}
We describe here two existing classes of globally hypoelliptic $\mathbb R^k$ actions. As far as we know, there are no other examples in the literature.

\vspace{0.2in}

\noindent {\bf Torus examples.} First class of examples are on tori $\mathbb T^n$. Let $X_{\al^1}, \dots, X_{\al^k}$ be $k$ constant coefficient vector fields on $\mathbb T^n$.  If $x_i, \, i=1, \dots, n$ are coordinates on $\mathbb T^n$, then each $X_{\al^j}= \sum _{l=1}^{n} \al_l^j \frac{\partial}{\partial x_l}$, where for each $j=1, \dots, k$, $\al^k=(\al_1^j, \dots, \al_n^j ) $ are vectors in $\mathbb R^n$.

Vectors $\al^1, \dots, \al^k$ are simultaneously Diophantine if there exist constants $C$ and $\tau$ such that for all non-zero $m\in \R^n$,
$$\max\{|\al^1\cdot m|, \dots |\al^k\cdot m|\}\ge C\|m\|^{-\tau}$$
If $k=1$ and the above inequality holds, then the vector $\al^1$ is Diophantine.

It is an exercise in Fourier analysis to show that If $\al^1, \dots, \al^k$ are simultaneously Diophantine, then the $\R^k$ action $\rho$ generated by $X_{\al^1}, \dots, X_{\al^k}$ is globally hypoelliptic.

\vspace{0.2in}
\noindent {\bf Nilmanifold examples.} The examples we describe here were found to be globally hypoelliptic in \cite{CR1}, and in \cite{D} global hypoellipticity was used for obtaining a local rigidity result.

Let $\frak N$ be a 2-step rational nilpotent Lie algebra and let $N$ be the corresponding connected simply connected Lie group. Let $\Gamma$  be a (cocompact) discrete subgroup of $N$ and let $M = \Gamma\setminus N$.
We can choose  a linear basis $Y_1 , \dots,Y_q$ and $Z_1, \dots, Z_p$  for $\frak N$ (selected from $\log \Gamma$) so that:
\begin{itemize}
\item $Y_1 +[N,N],\dots,Y_q +[N,N]$ is a basis for $N/[N,N]$, and

\item  $Z_1,\dots,Z_p$ is a basis for $[N,N]$.

\end{itemize}
We define $\R^2$ action $\rho$ as follows: for vectors
$a = (a_1,\dots,a_q) \in  \R^q $ and $b = (b_1,\dots,  b_p) \in  \R^p$, action $\rho$ is generated by the commuting pair of constant coefficient vector fields
$X_1 :=a_1Y_1+\dots+a_qY_q$ and $X_2 :=b_1Z_1+\dots+b_pZ_p$.
%Vector $a = (a1, . . . , an) ? Rn is Diophantine, if there exist
%positive constants C and ? > n ? 1 such that
%|a1k1 +иии+ankn|?C?(k1,...,kn)???
%holds for any non-trivial integer vector (k1, . . . , kn).
If both vectors $a$ and $b$ are Diophantine, then it was proved in \cite{CR}, that the action $\rho$ is globally hypoelliptic.

%\begin{rem} For any action $\rho$ as in Theorem \ref{main} global hypoellipticity of $L_\rho$ implies unique ergodicity by Corollary \ref{unique_ergodicity}, in particular such actions are ergodic. By Moore's ergodicity criterion (Zimmer's book...) this implies that $H$ has a non-compact one-parameter subgroup, so again my Moore's ergodicity criterion action $\rho$ has some ergodic $\mathbb R$ sub-action.
%\end{rem}

%\begin{rem} We recall the following fact proved in Proposition 3.11 in \cite{FFRH}:  If $G$ is a connected semi-simple group and $G/D$ a finite volume space, then existence of an ergodic flow on $G/D$ implies that the connected component of the identity of $D$ in $G$ is normal in $G$.  By Corollary \ref{unique_ergodicity} any $\mathbb R^k$-action $\rho$ with globally hypoelliptic $L_\rho$ is uniquely ergodic, and therefore ergodic. In that case it contains an ergodic flow by a very general result for ergodic $\mathbb R^k$ actions proved By Pugh and Shub in \cite{PS}. This implies that we may assume that $G$ has a finite center and that $D$ is discrete.
%\end{rem}

\subsection{Comments on existence of GH actions on nilmanifolds}\label{nilmanifold}

 Let $N$ be nilpotent Lie group of step $r$ and let $\mathfrak N$ be its Lie algebra. Let $\mathfrak N_j=[\mathfrak N, \mathfrak N_{j-1}]$ , $j=1, \dots, r$ denote the lower central series of $\mathfrak N$.  Let $\GN$ be a compact nilmanifold.

Let $\rho$ denote the action generated by $X_1, \dots, X_k$ and let $L_\rho$ denote the leafwise Laplacian $\sum_{i=1}^k X_i^2$. As noted in Remark \ref{WCF}, if $L_\rho$  is GH then the system $\{X_1, \dots, X_k\}$ is GH.

In \cite{CR} and \cite{CR1} Cygan and Richardson conjecture the following:
\begin{conj} \label{conjCR} The system  $\{X_1,  \dots, X_k\}$ of constant coefficient vector fields on $\GN$ \color{black} is GH if and only if the following two conditions hold:

i) The system is GH on the associated torus.

ii) Let $\mathfrak  L$ denote the Lie subalgebra spanned by $X_1, \dots, X_k$. For each non-zero integral functional $\la\in (\mathfrak N_j/\mathfrak N_{j+1})^*$,
\begin{equation}\label{higher-step}
\la(\mathfrak L\cap \mathfrak N_j+\mathfrak N_{j+1})\ne 0, \, \, \, \, j=1, \dots, r-1
\end{equation}
($\la\in \mathfrak N_j^*$ is integral if $\la(\log \Gamma\cap \mathfrak N_j)\subset \mathbb Z$. )
\end{conj}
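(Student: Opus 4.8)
The plan is to pass to the Plancherel decomposition of $L^2(\GN)$ adapted to the nilpotent structure and to translate global hypoellipticity of the system into uniform, polynomially controlled lower bounds for the operators $d\pi(X_i)$ across the occurring irreducible representations. Concretely, by the Kirillov orbit method together with Richardson's decomposition of $L^2(\GN)$ for a compact nilmanifold one has $L^2(\GN)=\bigoplus_{\pi} m_\pi\,\mathcal H_\pi$, where $\pi$ ranges over the unitary classes attached to integral coadjoint orbits $\mathcal O_\pi\subset\mathfrak N^*$ (those meeting the integrality condition determined by $\log\Gamma$) and $m_\pi<\infty$; each $X_i\in\mathfrak N$ acts in $\mathcal H_\pi$ as the skew-adjoint operator $d\pi(X_i)$. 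I would first record the standard reduction: the system $\{X_1,\dots,X_k\}$ is GH if and only if there is an a priori estimate $\|u\|_{s}\lesssim \sum_i\|X_iu\|_{s'}+\|u\|_0$ with a fixed finite loss $s'-s$ (this equivalence follows from a closed-graph argument), and, since Sobolev norms are read off representation by representation, this in turn reduces to establishing such an estimate in each $\mathcal H_\pi$ with a constant growing at most polynomially in the orbit parameter of $\pi$.

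The one-dimensional representations, which factor through the abelianization $N/[N,N]$ and hence through the associated torus $\T^q$, are governed exactly by condition (i). On these the $X_i$ act as the characters given by the projected constant vector fields, so the restricted system is literally the toral system of Section~\ref{examples}; thus GH of the system on $\GN$ forces GH on the associated torus, giving necessity of (i), while conversely simultaneous Diophantineness supplies the required polynomial bound on this part of the spectrum.

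The infinite-dimensional part is governed by (ii). For such a $\pi=\pi_\lambda$ let $j$ be the largest index with $\lambda|_{\mathfrak N_j}\neq 0$; then $1\le j\le r-1$, and since $\mathfrak N_j$ is central modulo $\mathfrak N_{j+1}$ while $\lambda|_{\mathfrak N_{j+1}}=0$, every $X\in\mathfrak L\cap\mathfrak N_j$ acts in $\mathcal H_\pi$ as the scalar $i\lambda(X)$. Condition (ii) furnishes such an $X$ with $\lambda(X)\neq0$, and because $X\in\mathfrak L$ it is a combination $X=\sum_i c_iX_i$, so the datum $Xu=\sum_i c_if_i$ is smooth; integrality of $\lambda$ pins $\lambda(X)$ into a discrete set and lets one recover $u=(i\lambda(X))^{-1}Xu$ on $\mathcal H_\pi$ with controlled inverse. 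For necessity, if (ii) fails at some level $j$ for a non-zero integral $\lambda$, I would run this construction in reverse along the representations attached to the integral multiples $n\lambda$: the common scalar obstruction disappears and one assembles a distributional solution of the system that fails to be smooth, mirroring the classical Heisenberg constructions of Greenfield--Wallach and Cygan--Richardson.

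The main obstacle is the sufficiency estimate in those infinite-dimensional representations where the distinguished top-level generator $X$ has $\lambda(X)$ small: there the scalar bound $|\lambda(X)|$ degenerates, and one must instead extract coercivity from the genuinely unbounded action of the remaining generators, which in the Stone--von Neumann model are harmonic-oscillator-type operators whose spectral gap is controlled by the size of the symplectic form on $\mathcal O_\lambda$. Making these two sources of lower bounds --- the scalar contributions from $\mathfrak L\cap\mathfrak N_j$ and the oscillator gaps from the non-central $X_i$ --- trade off with a uniform polynomial loss, and organizing the argument as an induction on the step $r$ via the quotients by the terms of the lower central series (which lower the step while preserving the hypotheses), is the technical heart of the proof, and the place where the arithmetic of the integrality condition must be fused with the harmonic analysis on each orbit.
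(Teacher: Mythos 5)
You should first be aware that the statement you set out to prove is stated in the paper as a \emph{conjecture} (Conjecture~\ref{conjCR}, due to Cygan and Richardson): the paper contains no proof of it, and explicitly records that it is known only under an extra representation-theoretic hypothesis, namely that every infinite-dimensional representation occurring in $L^2(\Gamma\setminus N)$ has a flat coadjoint orbit or is induced from a polarization of codimension one in $\mathfrak N$ (which covers, in particular, all $2$-step nilmanifolds) \cite{CR}. So there is no ``paper proof'' to compare against, and your proposal must stand as a new argument; it does not.

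The skeleton you set up is the standard and correct one: the Richardson-type decomposition of $L^2(\Gamma\setminus N)$ into irreducibles with finite multiplicities, the identification of the one-dimensional part of the spectrum with the associated torus (giving condition~(i)), and the observation that if $j$ is maximal with $\lambda|_{\mathfrak N_j}\neq 0$ then the whole orbit kills $\mathfrak N_{j+1}$, so every $X\in \mathfrak L\cap \mathfrak N_j$ acts in $\mathcal H_{\pi_\lambda}$ as the scalar $\sqrt{-1}\,\lambda(X)$, which is where condition~(ii) enters. But the proof is not there, and you say so yourself: the step you defer as ``the technical heart'' --- coercive estimates with a \emph{uniform polynomial loss} across the infinite family of representations in which the distinguished scalar $|\lambda(X)|$ degenerates, obtained by trading it against oscillator-type spectral gaps of the non-central generators --- is precisely the open core of the conjecture, and precisely where Cygan--Richardson needed the flat-orbit/codimension-one hypothesis. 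Note that condition~(ii) is a purely qualitative non-vanishing statement with no Diophantine content, whereas GH is quantitative: already on the torus, non-resonance of each frequency is far from sufficient and a simultaneous Diophantine condition is required, so on the nilmanifold all quantitative control must be extracted from the orbital harmonic analysis, and for non-flat orbits with higher-codimension polarizations no such mechanism is known. Two further steps are asserted rather than proved: the closed-graph reduction of GH to a fixed finite-loss a priori estimate with representation-wise polynomial constants; and the necessity of~(ii), where running the construction ``in reverse along $n\lambda$'' requires producing a single non-smooth distribution $u$ with \emph{all} $X_iu$ smooth, i.e.\ vectors in each $\mathcal H_{\pi_{n\lambda}}$ nearly annihilated by all generators simultaneously with controlled Sobolev growth --- plausible in the spirit of \cite{GW2, CR1}, but not automatic, since the non-central generators act as unbounded operators on these spaces. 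As it stands, your proposal is an accurate map of why the statement is hard, not a proof of it.
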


This conjecture is proved in \cite{CR} under additional condition that for every infinite dimensional representation of $\GN$,
%$\pi$ in $\reps_\infty$
the corresponding coadjoint orbit is either flat  or the representation \color{black} is inducable from a polarization of codimension one in $\mathfrak N$. In particular, the conjecture holds for 2-step nilmanifolds, since all orbits are flat, and also holds for any nilmanifolds of higher step which have all orbits flat.

Whenever the Conjecture \ref{conjCR} holds, it follows that if an $\mathbb R^k$ homogeneous action on $\GN$ has GH leafwise Laplacian then the rank of the action $k$ must be at least the number of steps $r$ of $N$. Otherwise from the condition $ii)$ above, the system $\{X_1, \dots, X_k\}$  is not GH, thus the corresponding leafwise Laplacian is not GH.

In particular, in the setting of Theorem \ref{main} this implies that the action $\rho$ with a globally hypoelliptic $L_\rho$ is up to smooth conjugacy an action by translations on a nilmanifold which is either a 2-step nilmanifold, or a nilmanifold which has an infinite dimensional representation which is neither flat nor is inducible from a polarisation of codimension one in $\frak N$. We believe that the latter case can be eliminated and that $\mathbb R^2$ actions with GH $L_\rho$  up to smooth conjugacy exist only on 2-step nilmanifolds, in which case they are smoothly conjugate to Diophantine examples described in Section \ref{examples}.

\subsection{Cohomological properties of GH actions}\label{coh-discussion}

The first cohomology $H^1_\rho(\ci(M))$ over a smooth $\R^k$ action with coefficients in smooth functions $\ci(M)$ is defined as usual: it is the quotient of smooth cocycles factored by smooth coboundaries, for more details we refer to \cite{D}. It is an important feature of a dynamical system for many reasons; in particular, having a finite dimensional $H^1_\rho(\ci(M))$ can lead to classification of skew product extensions, or even classification of perturbations of $\rho$. In the case of flows there is only one known situation where $H^1_\rho(\ci(M))$ is finite dimensional and this is the case of Diophantine flows on the torus. In this case $\dim H^1_\rho(\ci(M))=1$ and this fact is instrumental in the KAM type local classification results for Diophantine flows on the torus. It was conjectured by Katok \cite{K} that if $\dim H^1_\rho(\ci(M))=1$ for a smooth flow $\rho$, then $\rho$ is smoothly conjugate to a Diophantine flow on the torus. Much later in \cite{CC} it was proved that this conjecture is actually equivalent to Conjecture \ref{GWK}, i.e. that condition $\dim H^1_\rho(\ci(M))=1$ for flows is equivalent to global hypoellipticity. This fact plays an important role in the works towards  Conjecture \ref{GWK}.

In the case $\rho$ is an $\R^k$ action with $k\ge 2$, the corresponding cohomological condition would be $\dim H^1_\rho(\ci(M))=k$ and it is not true that it is equivalent to global hypoellipticity. Namely, the Weyl chamber flow examples (these are homogeneous $\R^k$ actions, $k\ge 2$, on spaces $SL(k+1, R)/\Gamma$, where $\Gamma$ is a cocompact lattice) enjoy the property $\dim H^1_\rho(\ci(M))=k$ \cite{KS0}, while they are not globally hypoelliptic because their orbit foliation is not minimal \cite[Theorem 2.13]{PR}. It is not known, but presumably it should be true, that global hypoellipticity implies  $\dim H^1_\rho(\ci(M))=k$. The following results on cohomology over GH actions was obtained in \cite{D}:

\begin{prop}[\cite{D}] \label{1coh-trivial} Let $\rho$ be a smooth volume preserving globally hypoelliptic $\mathbb R^k$ action.
 If  $\rm Im (L_\rho)$ is a closed space, then $\dim H^1_\rho(\ci(M))< \infty$.  If in addition $\Ker L_\rho$ is 1-dimensional spanned by the invariant volume form, then $\dim  H^1_\rho(\ci(M))=k$ i.e. every cocycle with trivial average is a coboundary.
\end{prop}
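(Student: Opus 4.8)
The plan is to compute $H^1_\rho(\ci(M))$ directly from the infinitesimal cochain complex of the action. A smooth $1$-cocycle is a tuple $(f_1,\dots,f_k)\in\ci(M)^k$ with $X_if_j=X_jf_i$ for all $i,j$, and a coboundary is a tuple $(X_1g,\dots,X_kg)$ with $g\in\ci(M)$. To such a cocycle I attach the smooth function $F:=\sum_{i=1}^kX_if_i$, and observe that a coboundary gives $F=L_\rho g\in\mathrm{Im}(L_\rho)$. The first thing I would establish is the identification $\mathrm{Im}(L_\rho)=\{h\in\ci(M):D(h)=0\text{ for all }D\in\Ker L_\rho\}$. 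Since $\rho$ preserves a smooth volume form $\mu$ by Proposition \ref{trivial_kernel}, each $X_i$ is skew-adjoint on $L^2(M,\mu)$, so $L_\rho$ is formally self-adjoint and its transpose on $\E'(M)$ is again $L_\rho$, with kernel the space $\Ker L_\rho$ of invariant distributions. Under the hypothesis that $\mathrm{Im}(L_\rho)$ is closed, the closed range theorem for the continuous map $L_\rho\colon\ci(M)\to\ci(M)$ of Fréchet spaces yields exactly this identity; in particular any smooth $F$ annihilated by all invariant distributions is of the form $L_\rho g$ with $g\in\ci(M)$.

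The key algebraic step is as follows. Suppose $(f_i)$ is a cocycle with $F\in\mathrm{Im}(L_\rho)$, and pick $g$ with $L_\rho g=F$; set $h_i:=X_ig-f_i$. Using $[X_i,X_j]=0$ and the cocycle relation one checks $X_jh_i=X_ih_j$, and then
\[
L_\rho h_i=\sum_jX_j(X_jh_i)=\sum_jX_j(X_ih_j)=X_i\sum_jX_jh_j=X_i(L_\rho g-F)=0 .
\]
Hence each $h_i$ lies in $K:=\{u\in\ci(M):L_\rho u=0\}$, which is isomorphic to $\Ker L_\rho$ (by self-adjointness and the smoothness of invariant distributions, Remark \ref{smooth}) and therefore finite dimensional by \cite{GW1}. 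Since $(f_i)=(X_ig)-(h_i)$, the class $[(f_i)]$ coincides with $-[(h_i)]$, represented by a tuple in $K^k$. To prove the first assertion I would then organize this via the obstruction map on cocycles $\Phi(f_i)(D):=D(F)$, $D\in\Ker L_\rho$. Because $D(L_\rho g)=0$, coboundaries lie in the kernel of $\Phi$, so $\Phi$ descends to a linear map $H^1_\rho\to(\Ker L_\rho)^*$ whose image has dimension at most $\dim\Ker L_\rho$. Any cocycle with $\Phi(f_i)=0$ has $F\in\mathrm{Im}(L_\rho)$, so its class is represented in $K^k$; hence the kernel of the induced map has dimension at most $\dim K^k=k\dim\Ker L_\rho$, and $\dim H^1_\rho\le(k+1)\dim\Ker L_\rho<\infty$.

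For the second assertion, when $\Ker L_\rho=\R\mu$ is one-dimensional, $K$ consists of the constant functions, and for every cocycle $\mu(F)=\int_M F\,d\mu=\sum_i\int_M X_if_i\,d\mu=0$ by invariance of $\mu$; thus $\Phi\equiv0$, every $F$ is solvable, and every cocycle is cohomologous to a constant tuple $-(h_i)$. The averaging map $[(f_i)]\mapsto(\int_M f_i\,d\mu)_i$ then gives an isomorphism $H^1_\rho\cong\R^k$ — a constant tuple is a coboundary only if it vanishes, as seen by integrating against $\mu$ — so $\dim H^1_\rho=k$. Equivalently, if the cocycle has trivial average then the constants $h_i$ are forced to vanish, whence $(f_i)=(X_ig)$ is a coboundary.

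The step I expect to be the real obstacle is the functional-analytic identification of $\mathrm{Im}(L_\rho)$ with the annihilator of the invariant distributions: this is where the closed-range hypothesis, the formal self-adjointness coming from volume preservation, and global hypoellipticity (ensuring that $\Ker L_\rho$ is finite dimensional and consists of smooth objects) must be combined, with some care for the Fréchet duality between $\ci(M)$ and $\E'(M)$. Once this solvability statement is in hand, the cocycle manipulation and the dimension count are routine.
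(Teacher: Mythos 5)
Your argument is correct, and note that the paper itself contains no proof of this proposition --- it is quoted from \cite{D} --- but what you have written is essentially the argument given there: use the closed-range hypothesis, Fr\'echet duality, and the formal self-adjointness of $L_\rho$ coming from volume preservation to solve $L_\rho g=\sum_i X_if_i$, then check that the corrections $h_i=X_ig-f_i$ satisfy $L_\rho h_i=0$ and hence lie in the finite-dimensional (by \cite{GW1}) smooth kernel, which reduces to the constants when $\Ker L_\rho$ is spanned by the invariant volume. Your dimension count and the averaging isomorphism $H^1_\rho(\ci(M))\cong\R^k$ in the one-dimensional case are likewise sound.
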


It may be possible that globally hypoelliptic $\R^k$ actions with $k\ge 2$ exist in a non-homogeneous setting.

\subsection{Ergodic generators of $\rho$ and discreteness of $D$}\label{discreteD}
 By Corollary \ref{unique_ergodicity} any $\mathbb R^k$-action $\rho$ with globally hypoelliptic $L_\rho$ is uniquely ergodic, and therefore ergodic. We recall now a very general result for ergodic $\mathbb R^k$ actions proved by Pugh and Shub in \cite{PS} concerning existence of ergodic elements in ergodic $\mathbb R^k$-actions:

 \begin{thm}[Theorem 1 \cite{PS}]
 If $\mathbb R^k$ acts ergodically on the measure space $(M, \mu)$, and $L^2(M, \mu)$ is separable, then all the elements of $\mathbb R^k$, off a countable family of hyperplanes, are ergodic.
 \end{thm}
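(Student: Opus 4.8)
The plan is to pass to the Koopman representation, read off ergodicity of one-parameter subgroups from the joint spectrum, and then reduce the statement to a purely measure-theoretic fact about which hyperplanes a finite measure can charge. Throughout, an element $t\in\R^k$ is called ergodic when the one-parameter subgroup $s\mapsto\rho(st)$ it generates acts ergodically. Assuming, as in the applications above, that the action preserves $\mu$, the operators $U_t f=f\circ\rho(-t)$ form a strongly continuous unitary representation of $\R^k$ on the separable space $L^2(M,\mu)$. By the Stone--Naimark--Ambrose--Godement theorem there is a unique projection-valued measure $P$ on $\widehat{\R^k}\cong\R^k$ with $U_t=\int_{\R^k}e^{2\pi i\langle t,\xi\rangle}\,dP(\xi)$. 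A vector is fixed by every $U_t$ exactly when it lies in the range of $P(\{0\})$, so ergodicity of the whole action means precisely that $P(\{0\})L^2(M,\mu)$ is one-dimensional. For fixed $t\neq0$, a vector is invariant under the entire flow $s\mapsto U_{st}$ if and only if $\langle t,\xi\rangle=0$ on its spectral support, i.e. it lies in the range of $P(t^\perp)$, where $t^\perp=\{\xi:\langle t,\xi\rangle=0\}$. Since $P(\{0\})$ already accounts for the constants and $\{0\}\subseteq t^\perp$, the flow generated by $t$ is ergodic precisely when $P(t^\perp\setminus\{0\})=0$.

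Next I would make this scalar. By separability $P$ admits a maximal spectral type: a finite positive Borel measure $\nu$ on $\R^k$ with $\nu(\{0\})=0$ such that, for Borel $A\subseteq\R^k\setminus\{0\}$, one has $P(A)=0$ if and only if $\nu(A)=0$ (take a weighted sum of the scalar measures $\langle P(\cdot)f_n,f_n\rangle$ over a countable dense set $\{f_n\}$ in the orthogonal complement of the constants). With this $\nu$ the non-ergodic elements are exactly
\[
B=\{\,t\in\R^k\setminus\{0\}:\nu(t^\perp)>0\,\},
\]
together with $t=0$, which lies on every hyperplane. Thus the theorem reduces to the claim that for a finite positive measure $\nu$ on $\R^k$ with $\nu(\{0\})=0$, the set $B$ is contained in a countable union of hyperplanes through the origin.

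To prove this I would decompose $\nu$ by the subspaces it charges. Call a linear subspace $V$ \emph{atomic} if $\nu(V)>0$ while every subspace of $V$ of dimension $\dim V-1$ is $\nu$-null; then every proper subspace of $V$ is $\nu$-null. Two distinct atomic subspaces of a common dimension $m$ meet in a subspace of dimension at most $m-1$, hence in a $\nu$-null set, so the atomic subspaces of each fixed dimension are pairwise essentially disjoint, and finiteness of $\nu$ forces there to be only countably many of them; ranging over $m=1,\dots,k$ leaves countably many atomic subspaces $V_1,V_2,\dots$ in all. A descending induction on dimension shows that every subspace with positive $\nu$-measure contains an atomic subspace of positive measure: if $V$ is not already atomic, pass to a charged hyperplane inside $V$ and repeat, the process terminating because a charged line is automatically atomic. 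Applying this to a charged hyperplane $t^\perp$ with $t\in B$ yields some $V_i\subseteq t^\perp$, and $V_i\subseteq t^\perp$ says exactly that $t\in V_i^\perp$. Hence $B\subseteq\bigcup_i V_i^\perp$, a countable union of proper subspaces, each contained in a hyperplane through the origin, which is the claim.

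The main obstacle is this last claim, and specifically the fact that it concerns the normal directions $t$ rather than the charged hyperplanes themselves. A finite measure can charge uncountably many hyperplanes simultaneously -- for example, in $\R^3$ all planes through a fixed line on which $\nu$ is diffusely supported -- so one cannot merely count heavy hyperplanes. What saves the argument is that all such hyperplanes share a common orthogonality constraint, and the atomic-subspace decomposition is exactly what repackages this into countably many linear conditions on $t$. A secondary, routine point is justifying the unitary Koopman representation (joint measurability and measure-preservation of the action) and the existence of the maximal spectral type, both standard consequences of separability.
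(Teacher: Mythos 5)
The paper does not prove this statement; it is quoted verbatim as Theorem~1 of \cite{PS} and used as a black box, so there is no internal proof to compare against. Your argument is correct and complete, and it is essentially the standard spectral proof in the spirit of Pugh--Shub: pass to the Koopman representation, identify non-ergodic $t\neq 0$ with $P(t^\perp\setminus\{0\})\neq 0$ via the SNAG theorem, and reduce to showing that the hyperplanes charged by a maximal spectral type $\nu$ impose only countably many linear conditions on $t$. Your atomic-subspace decomposition handles the one genuinely delicate point correctly: pairwise-null intersections of same-dimensional atomic subspaces force countability, and every charged hyperplane contains an atomic subspace, so $B\subseteq\bigcup_i V_i^\perp$ with each $V_i^\perp$ proper because $\nu(\{0\})=0$. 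The only caveats are the ones you already flag (measure-preservation and joint measurability of the action), which hold in the paper's homogeneous setting.
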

 Since the space $G/D$ caries a finite measure, separability of the corresponding $L^2$ space is clear. So we have the following useful fact:
 \begin{cor} \label{ergodic_generators} For any  $\mathbb R^2$ action $\rho$ on $G/D$ we can make a choice of two ergodic 1-parameter subgroups which generate $\rho$.
 \end{cor}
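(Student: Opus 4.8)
The plan is to read the corollary as a direct application of the Pugh--Shub theorem quoted immediately above it, after checking that its two hypotheses (ergodicity and separability) hold, and then to extract a basis of $\mathbb R^2$ from the resulting ergodic directions. First I would record that the hypotheses are met. By Corollary~\ref{unique_ergodicity} the globally hypoelliptic action $\rho$ is uniquely ergodic, hence ergodic, with respect to the smooth invariant volume produced in Proposition~\ref{trivial_kernel}; and since $G/D$ is a second countable manifold carrying a finite invariant measure $\mu$, the space $L^2(G/D,\mu)$ is separable. Theorem~1 of \cite{PS} then supplies a countable family $\{\ell_i\}_i$ of hyperplanes of $\mathbb R^2$ (that is, of lines) such that for every $v\in \mathbb R^2\setminus\bigcup_i \ell_i$ the one-parameter subgroup $\{\exp(tv):t\in\mathbb R\}$ acts ergodically on $G/D$.

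Next I would select two linearly independent ergodic elements. Write $E=\mathbb R^2\setminus\bigcup_i \ell_i$ for the set of ergodic directions. Each line $\ell_i$ is Lebesgue-null in $\mathbb R^2$, so $E$ is co-null and in particular nonempty; choose $v_1\in E$. Necessarily $v_1\neq 0$, since the trivial element is not ergodic and hence $0\in\bigcup_i\ell_i$. Adjoining the single line $\mathbb R v_1$ to the family $\{\ell_i\}_i$ still leaves a co-null complement, so $E\setminus \mathbb R v_1$ is nonempty; choose $v_2$ there. By construction $v_1$ and $v_2$ are linearly independent and both generate ergodic one-parameter subgroups.

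Finally I would argue that these two subgroups generate $\rho$. The action $\rho$ is determined by the two-dimensional space of commuting generators spanned by $X_1,X_2$, which we identify with $\mathbb R^2$; since $(v_1,v_2)$ is a basis of this space, the linear automorphism of $\mathbb R^2$ carrying the standard basis to $(v_1,v_2)$ reparametrises $\rho$ so that its two generating one-parameter subgroups are precisely the ergodic ones just chosen. Thus $\rho$ is generated by two ergodic one-parameter subgroups, as claimed. There is no genuine analytic difficulty: the entire force of the statement is in the Pugh--Shub theorem. The only point requiring care -- and the step I would present explicitly -- is the elementary observation that the complement of countably many lines in $\mathbb R^2$ contains two linearly independent vectors, i.e.\ that the ergodic directions cannot all accidentally lie on a single line; this is handled by the co-nullity argument above, removing the extra line $\mathbb R v_1$ at the second stage.
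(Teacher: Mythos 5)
Your proposal is correct and follows exactly the paper's route: the paper derives the corollary directly from the quoted Pugh--Shub theorem, noting that ergodicity comes from Corollary~\ref{unique_ergodicity} and that separability of $L^2(G/D,\mu)$ is clear from the finite measure. The only difference is that you make explicit the elementary final step (that the complement of countably many lines in $\mathbb R^2$ contains two linearly independent vectors), which the paper leaves implicit, and your treatment of that step is sound.
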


When $G$ is semi-simple,  the following  proposition proved  in \cite{FFRH} allowes reduction to the case when $G$ has a finite center and $D$ is discrete:
 \begin{prop}[Proposition 3.11 \cite{FFRH}]\label{Disdiscrete}
 If $G$ is a connected semi-simple group and $G/D$ a finite volume space, then existence of an ergodic homogeneous flow on $G/D$ implies that the connected component of the identity of $D$ in $G$ is normal in $G$.  This implies that we may assume that $G$ has a finite center and that $D$ is discrete.
 \end{prop}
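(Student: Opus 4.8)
The plan is to extract normality of $D^0$ from the existence of a finite $G$-invariant measure on $G/D$ by a Borel-density argument, using ergodicity of the flow only to dispose of the compact factors of $G$. The first step is a reduction to the normalizer. Observe that $D$ normalizes its own identity component: for $d\in D$ the set $dD^0d^{-1}$ is a connected subgroup of $D$ through the identity, hence equals $D^0$. Thus $D\subseteq P:=N_G(D^0)$, so $P$ is a closed subgroup with $D^0\trianglelefteq P$, and the projection $G/D\to G/P$ is $G$-equivariant and surjective. Pushing the finite $G$-invariant measure on $G/D$ forward along this map and normalizing yields a $G$-invariant \emph{probability} measure $\mu$ on the homogeneous space $G/P$, which carries full support since $G$ acts transitively on it. The goal is now to show $P=G$.

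The heart of the matter is to realize $G/P$ inside a projective space. Let $d=\dim\mathfrak d$ (with $\mathfrak d=\mathrm{Lie}(D^0)$) and consider the representation $\wedge^{d}\mathrm{Ad}\colon G\to GL(\wedge^{d}\mathfrak{g})$, which factors through the adjoint group and so is insensitive to the center of $G$. The decomposable line $\ell_0=[\,\wedge^{d}\mathfrak d\,]\in\mathbb{P}(\wedge^{d}\mathfrak{g})$ has stabilizer $\{g:\mathrm{Ad}(g)\mathfrak d=\mathfrak d\}=N_G(\mathfrak d)$, and since a connected subgroup is determined by its Lie algebra this equals $N_G(D^0)=P$. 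Hence $gP\mapsto \wedge^{d}\mathrm{Ad}(g)\,\ell_0$ identifies $G/P$ with the single $G$-orbit of $\ell_0$, and $\mu$ becomes a $G$-invariant probability measure on that orbit in $\mathbb{P}(\wedge^{d}\mathfrak g)$. Assuming momentarily that $G$ has no compact factors, the measure form of the Borel density theorem (Furstenberg's lemma) guarantees that any $G$-invariant probability measure on a projective space is concentrated on the $G$-fixed points. As the support of $\mu$ is the whole orbit $G\cdot\ell_0$, every point of the orbit must be fixed; in particular $\mathrm{Ad}(g)\mathfrak d=\mathfrak d$ for all $g\in G$, so $\mathfrak d$ is an ideal and $D^0$ is normal.

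The remaining issue is the presence of compact factors, and I expect this to be the main obstacle, since Borel density fails verbatim for compact groups; this is exactly the point at which the ergodic flow is indispensable. Write $G=G_{\mathrm{nc}}G_{\mathrm c}$ as an almost-direct product of its non-compact and compact parts. Applying the previous step to $G_{\mathrm{nc}}$ gives $G_{\mathrm{nc}}\subseteq P$; since $G_{\mathrm{nc}}$ is normal, it then acts trivially on $G/P$, so the $G$-action on $G/P$ factors through the compact group $G_{\mathrm c}$, making $G/P$ a compact homogeneous space of $G_{\mathrm c}$. The given ergodic homogeneous flow on $G/D$ descends to an ergodic one-parameter translation flow on $G/P$ (an invariant set downstairs pulls back to an invariant set upstairs). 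But the closure of a one-parameter subgroup of a compact group is a torus, and ergodicity of such a translation flow forces that torus to act transitively, so $G/P$ is itself a torus on which $G_{\mathrm c}$ acts through an abelian quotient; being semisimple, $G_{\mathrm c}$ admits no nontrivial abelian quotient, whence $G/P$ is a point and $P=G$. This proves $D^0\trianglelefteq G$.

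For the stated consequence I would pass to $\bar G=G/D^0$, which is again semisimple, with $\bar D=D/D^0$ discrete and $\bar G/\bar D\cong G/D$ of finite volume; this justifies assuming $D$ discrete. One then applies the standard further reduction modulo the center (which the $\mathrm{Ad}$-representation already ignored) to assume in addition that $G$ has finite center, completing the reduction.
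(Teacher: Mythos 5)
Your argument is essentially correct, but note that the paper itself offers no proof of this statement: it is imported verbatim as Proposition 3.11 of \cite{FFRH}, so there is no in-text argument to compare against, and what you have written is a self-contained substitute for the citation. Your route --- push the finite invariant measure from $G/D$ down to $G/N_G(D^0)$, embed that homogeneous space as the orbit of the line $\wedge^{\dim\mathfrak d}\mathfrak d$ in $\mathbb{P}(\wedge^{\dim\mathfrak d}\mathfrak g)$, and invoke the measure-theoretic Borel density theorem to force $G_{\mathrm{nc}}\subseteq N_G(D^0)$ --- is the standard Furstenberg-style argument and is sound, including the correct observation that ergodicity of the flow is needed only to kill the compact factors (where Borel density fails, and where without ergodicity the conclusion is genuinely false, e.g.\ $SO(3)/SO(2)$). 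Two spots deserve tightening. First, the phrase ``$G/P$ is a torus on which $G_{\mathrm c}$ acts through an abelian quotient'' is not justified as written; the clean way to finish is either to note that the invariant metric on $G/P$ is translation-invariant for the transitive torus $T$, hence flat, so the connected isometry group is abelian, or more directly to use that a homogeneous space of a compact semisimple group has finite fundamental group while a positive-dimensional torus does not. Second, the closing reduction ``pass to $G/Z(G)$'' hides the nontrivial fact that the image of $D$ in the adjoint group remains discrete (equivalently that $DZ(G)$ is discrete when $Z(G)$ is infinite); this is standard for lattices but is exactly the kind of point the reference is being cited to cover, so it should at least be flagged rather than absorbed into ``standard''. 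Neither issue affects the main claim that $D^0$ is normal.
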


In conclusion, when proving our main results we may assume that $D$ is a lattice in $G$ and that $\rho$ has ergodic generators.

\subsection{Reduction of statement of Conjecture \ref{main2} to the the semisimple and solvable case}\label{reduction}
In this section we explain that obtaining Conjecture \ref{main2} reduces to considering the two cases: when $G$ is semisimple and when $G$ is solvable. This reduction is for the most part the same as in \cite{FFRH}, and we describe it here for completeness.

%We may assume $G$ is simply connected, otherwise pull back $D$ to the universal cover of $G$. The universal cover $\tilde G$ of a connected Lie group $G$ is a simply connected Lie group which covers G by a Lie group homomorphism. Note that
%for any Lie algebra $\mathfrak g$ there is a unique (up to Lie group isomorphism) connected, simply connected Lie group whose Lie algebra is G.
Let $G$ be a connected, simply connected Lie group.
Let $\mathfrak g=\rm Lie \, G$ denote the Lie algebra of $G$. Every homogeneous  $\mathbb R^k$ action $\rho$ is given by a Lie algebra homomorphism from the Lie algebra $\mathfrak h$ of the acting group (in our situation this is $\mathfrak h=\rm Lie \, \mathbb R^k$), to  $\g$. In general not every Lie algebra homomorphism $\mathfrak h\to \mathfrak g$  extends to a Lie group homomorphism $H\to G$,  but when  $H$ is connected and simply connected, as in this case, then it is true that every such Lie algebra homomorphism extends and defines an $H=\mathbb R^k$ action.

Consider the Levi decomposition $G=L\ltimes R$ of the Lie group $G$, where $L$ is semisimple and $R$ is the solvable radical of $G$. Let $G_\infty$ be the smallest connected normal subgroup of $G$ containing the Levi subgroup $L$.

% and denote by $q: G\to L$ the projection onto the Levi factor.

If $G/D$ admits an ergodic one-parameter flow, then the product ${RD}$ is closed \cite[Lemma 2.2.8]{KSS}. The maximal semisimple quotient space of $G/D$ is $ G/RD$ and the maximal solvable quotient is $G/\overline{G_\infty D}$.

%The group $q(D)$ is closed in $L$, and $G/D$ factors onto the space $L/q(D)\sim G/RD$ with fiber $R/R\cap D$. An ergodic flow on $G/D$ factors via $q$ to an ergodic flow on $L/q(D)$. Also an ergodic flow on $G/D$ induces an ergodic flow on the space $G/\overline{G_\infty D}$ (Theorem 3.2 in \cite{FFRH}).

It is well known that a homogeneous flow on $G/D$ is ergodic iff so are the projected flows on the maximal semisimple quotient  $ G/RD$, and on the maximal solvable quotient $G/\overline{G_\infty D}$ (cf. \cite[Theorem 2.2.9]{KSS}).

%The following comprises  Theorems 2.2.9. and 2.2.10  in \cite{KSS}, and the initial proofs of these results are due to Dani \cite{}...223, 232, 255

%\begin{thm}
%a) A homogeneous flow on $G/D$ is ergodic iff so are the projected flows on the maximal semisimple quotient  $ G/RD$, and on the maximal solvable quotient $G/\overline{G_\infty D}$.

%b) If $G=\overline{G_\infty D}$, then any ergodic homogeneous flow on $G/D$ is weakly mixing; otherwise $G/D$ admits no weakly mixing homogeneous flow.

%c) If $G/D$ admits a weakly mixing one-parameter homogeneous flow, then the radical $R$ of $G$ is nilpotent.

%\end{thm}

The strategy of the proof of Conjecture \ref{conj_HR} is to consider the projected action to the semisimple and solvable factors. In order to do this, one can make use of the following proposition.
\begin{prop}\label{reductionprop}
Let $\rho$ be an $\mathbb R^k$ homogeneous action on $G/D$.Suppose that $\rho$ projects smoothly onto an $\mathbb R^k$ action $p(\rho)$ on $G_1/D_1$ via an epimorphism $p:G\to G_1$ such that $D\subset p^{-1}(D_1)$. Then there is bounded, linear and injective map $P: \E'(G_1/D_1)\to \E'(G/D)$  such that:

a)  $P: Ker L_{p(\rho)}\to Ker L_{\rho}$

b) $P: \cap_{i} Ker \, p_*(X_i)\to \cap_i  Ker \, p_*(X_i)$

c) If $\rho$ is distributionally uniquely ergodic, then $p(\rho)$ is distributionally uniquely ergodic.

%d) (??) If $L_\rho$ is GH, then $L_{p(\rho)}$

\end{prop}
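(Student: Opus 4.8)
The plan is to realize $P$ as the pullback of distributions along the smooth submersion $\pi\colon G/D\to G_1/D_1$ induced by $p$, and to deduce (a)--(c) from a single intertwining identity between fiber integration and the generating vector fields.

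First I would record the geometric setup. Since $p(D)\subset D_1$, the assignment $gD\mapsto p(g)D_1$ is a well-defined smooth map $\pi\colon G/D\to G_1/D_1$; because $p$ is an epimorphism, $\pi$ is a surjective submersion whose fibers are orbits of $K:=\Ker p$, each carrying a finite $K$-invariant measure. Under the standing finiteness and closedness assumptions these fibers are compact, so fiber integration $\pi_*\colon \E(G/D)\to\E(G_1/D_1)$, $(\pi_*\phi)(x)=\int_{\pi^{-1}(x)}\phi$, is a continuous linear surjection; surjectivity follows by choosing $\chi\ge 0$ in $\E(G/D)$ with $\pi_*\chi\equiv 1$ and noting $\pi_*\bigl((g\circ\pi)\chi\bigr)=g$. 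I then define $P$ to be its transpose, $\langle PT,\phi\rangle=\langle T,\pi_*\phi\rangle$, which is precisely the pullback $\pi^*$ on distributions and is therefore automatically well defined. Boundedness is immediate from continuity of $\pi_*$, and injectivity of $P$ follows from surjectivity of $\pi_*$: if $PT=0$ then $T$ annihilates all of $\E(G_1/D_1)$, so $T=0$.

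The heart of the argument is the intertwining $\pi_*\circ X_i=Y_i\circ\pi_*$ on $\E(G/D)$, where $Y_i:=p_*(X_i)$ generate $p(\rho)$. Since $\rho$ is homogeneous by left translations and $p_*X_i=Y_i$, the flow of $X_i$ on $G/D$ covers the flow of $Y_i$ on $G_1/D_1$, carrying the fiber over a point to the fiber over its downstairs image; the key point is that this covering flow transports the $K$-invariant fiber measure to the fiber measure, so differentiating $\pi_*$ along the flow gives the stated identity. Squaring and summing yields $\pi_*\circ L_\rho=L_{p(\rho)}\circ\pi_*$. Properties (a) and (b) then follow by duality with no further computation: for $T\in\Ker L_{p(\rho)}$ and any $\phi$ one has $\langle PT,L_\rho\phi\rangle=\langle T,\pi_*(L_\rho\phi)\rangle=\langle T,L_{p(\rho)}(\pi_*\phi)\rangle=0$, and the identical calculation with $X_i,Y_i$ in place of $L_\rho,L_{p(\rho)}$ shows $P$ maps $\cap_i\Ker Y_i$ into $\cap_i\Ker X_i$.

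For (c), property (b) injects $\cap_i\Ker Y_i$ into $\cap_i\Ker X_i$. If $\rho$ is distributionally uniquely ergodic then the latter is one-dimensional, so $\dim(\cap_i\Ker Y_i)\le 1$; as $p(\rho)$ always admits an invariant probability measure by the Markov--Kakutani theorem, this dimension is exactly one, forcing a unique invariant probability measure, which is therefore ergodic. Hence $p(\rho)$ is distributionally uniquely ergodic. The main obstacle is the fiber-measure computation underlying the intertwining: verifying both that fiber integration lands in $\E(G_1/D_1)$ (which needs the fibers to be compact, hence relies on the closedness facts quoted from \cite{KSS,FFRH}) and that the covering flow preserves the fiber measure, so that $\pi_*$ genuinely conjugates $X_i$ to $Y_i$. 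Everything else is formal duality.
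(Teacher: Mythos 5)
Your proposal is correct and follows essentially the same route as the paper, which simply cites \cite{FFRH} for parts a) and b): there too $P$ is the pullback of distributions, i.e.\ the transpose of fiber integration of test functions along the equivariant projection $G/D\to G_1/D_1$, with a)--b) following from the intertwining of $\pi_*$ with the generators and c) deduced from injectivity of $P$ plus Markov--Kakutani exactly as you do. The only nitpick is that the fibers of $\pi$ are orbits of $p^{-1}(D_1)$ rather than of $\Ker p$ alone (these coincide only when $p(D)=D_1$, as in the paper's applications), but this does not affect the argument.
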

Parts a) and b) are  proved in \cite{FFRH} and part c) follows easily from a) and b).

Therefore, to prove Conjecture \ref{conj_HR} one can first consider the maximal semisimple factor. If Theorem \ref{main} is generalised to the case of a general semisimple $G$, it means that if $\rho$ is a globally hypoelliptic action on a finite volume homogeneous space, then the maximal semisimple factor is trivial. This would reduce $\rho$ to the solvable factor. If Theorem  \ref{main2} is generalised to any homogeneous action $\rho$, then the conclusion of Conjecture \ref{conj_HR} follows.
%- There are no homogeneous globally hypoelliptic $\R^k$ actions on any finite volume space $G/D$, where  $G$ is semisimple.

%- Any globally hypoelliptic $\R^k$ action $R/D$ is a finite volume space, where $R$ is solvable, then

%Henceforth we consider the two cases separately: when $\rho$ is a homogenenous $\mathbb R^2$ action on $L/D$ where $L$ is semisimple, and when $\rho$ is a homogenenous $\mathbb R^2$ action on a solvmanifold $R/D$.

%We will also discuss separately dynamically different cases for $\rho$. For any generator $X$ of the given $\mathbb R^k$-action $\rho$ on $G/D$ let $ad(X)$ be the induced operator on the Lie algebra $\frak g$ of $G$. Let $\frak g^\lambda$ denote the generalized eigenspaces for the action of $ad(X)$ on $\frak g$ . Then $\frak g$ is the direct sum of $\frak g^\lambda$s. We call the generator $X$ \emph{quasi-unipotent} if all $|\lambda|$ have trivial real part, and we call it \emph{partially hyperbolic} otherwise.

%\section{Case when $\rho$ contains no quasi-unipotent generator} In this case the homogeneous action $\rho$ is generated by two partially hyperbolic ergodic generators.

%\subsection{G-semisimple}

%\subsection{G-solvable}

\section{Proof of Theorem \ref{main}}\label{thm_main}

As explained in Section \ref{discreteD} we may assume that $D$ is a lattice in $G$ and that the action $\rho$ has ergodic generators.

In this section we assume $G= SL(n, \mathbb R)$ and $\rho$ is an $\mathbb R^2$ action on $M=G/D$.  % where $D$ is a lattice in $G$.
Then $\rho$ is generated by two elements of the Lie group of $G$, which we call \emph{generators of $\rho$.}  The  following two cases are proved separately:

-Both generators of $\rho$ are quasi-unipotent.

-Action $\rho$ has a partially hyperbolic and a quasi-unipotent generator.

Our strategy is to prove that the given homogeneous action $\rho$ has non-smooth $\rho$-invariant distributions. By applying the Proposition \ref{trivial_kernel}, this implies that $\rho$ is not globally hypoelliptic.  We achieve this by looking into irreducible, unitary representations of the subgroups to which $\rho$ embeds.

%there exist invariant distributions which are not smooth. By   Proposition \ref{trivial_kernel} this will then prevent global hypoellipticity.

For this reason, we will repeatedly use a theorem of Kolmogorov-Mautner regarding the decomposition of unitary representations, \cite{S}.
Specifically, let $\mathcal H$ be a unitary representation of a Lie group $L$ with Lie algebra $\mathfrak{h}$.
Then $\mathcal H$ %\marginpar{Lie group correct for this theorem?}
decomposes into a direct integral of unitary representations $\mathcal H_{\sigma}$
\begin{equation}\label{eq:decompose_H}
\mathcal H = \int_{Z} \mathcal H_{\sigma} d\mu(\sigma),
\end{equation}
where $\mathcal H_{\sigma}$ is a direct sum of an at most countable number of irreducible, unitary representations with parameter $\sigma$, $Z \subset \RR^2$ and $(Z, d\mu)$ is a measure space.  %Moreover, it follows from irreducibility that elements of $\mathfrak{h}$ split across the irreducible components of $\mathcal H$.

Then let $W := (W_i)_i$ be an orthogonal subset of $\mathfrak{h}$ and define %\marginpar{orthogonal correct?}
\begin{equation}\label{eq:laplace}
\triangle := \triangle(W) = -\sum_i W_i^2\,.
\end{equation}  %\marginpar{explanation needed for "essentially" s.a.?}
The operator $\triangle$ is essentially self-adjoint and satisfies $\triangle \geq 0$.
So by the spectral theorem, the operator $(I+\triangle)^{s/2}$ is defined
for all $s > 0$.
We take the Sobolev space of order $s > 0$ of $\mathcal H$ to be the Hilbert subspace $\mathcal{H}^s$ of $\mathcal{H}$ to be the maximal domain of the operator $(I+\triangle)^{s/2}$ equipped with the inner product
\[
 \langle f, g\rangle_{s} := \langle (1 + \triangle)^s f, g\rangle_{\mathcal{K}}.
\]
 We furthermore set $\mathcal H^\infty := \cap_{s \geq 0} \mathcal H^s$,
 and let $\mathcal H^{-\infty}$ be its distributional dual space.

All operators in the enveloping algebra are decomposable with respect to the
direct integral decomposition \eqref{eq:decompose_H}, so for $s \geq 0$,
\begin{equation}\label{equa:SL2R)^2-unitary_decomp}
\mathcal H^s = \int_{Z} \mathcal H_{\sigma}^s d\mu(\sigma)\,,
\end{equation}
and
$
\mathcal H_{\sigma}^s:=\mathcal H^s\cap\mathcal{H}_\sigma
$
is endowed with the inner product induced
from $\mathcal H^s$.

The above definition for Sobolev Hilbert spaces of Hilbert Sobolev spaces follows Section~2.2 of \cite{FF1}, for example.

\subsection{Action $\rho$ has two quasi-unipotent generators}

\begin{lemma}
Let $n \geq 4$, and let $\rho$ be an
$\R^2$ action by unipotents
on $SL(n, \R)$.  Then $\rho$
embeds in $(SL(2, R) \ltimes \R^2)\ltimes\RR^3$ or
$SL(2, \R) \times SL(2, \R)$.
\end{lemma}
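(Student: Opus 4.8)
The plan is to argue entirely at the level of Lie algebras. Write the two commuting unipotent generators as $\exp X_1,\exp X_2$ with $X_1,X_2\in\sl(n,\R)$ commuting and nilpotent, and try to locate the two-dimensional abelian algebra $\mathfrak u:=\R X_1+\R X_2$ inside a concrete reductive-by-nilpotent subalgebra. Since $X_1$ and $X_2$ commute and are nilpotent, every combination $sX_1+tX_2$ is again nilpotent, so $\mathfrak u$ consists entirely of nilpotents and the associative algebra $\R[X_1,X_2]\subset M_n(\R)$ is local with nilpotent maximal ideal. The whole argument is organized around the simultaneous Jordan structure of the pair, i.e. the module structure of $\R^n$ over $\R[X_1,X_2]$, and around the position of $\mathfrak u$ relative to a suitable $\sl(2,\R)$.

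First I would produce an auxiliary copy $\mathfrak s\cong\sl(2,\R)$ by applying the Jacobson--Morozov theorem to an appropriately chosen nilpotent built from the pair, and complete it to a triple $(E,H,M)$. The semisimple element $H$ then gives a weight grading of $\sl(n,\R)$ and decomposes $\R^n$ into $\mathrm{ad}\,\mathfrak s$-irreducibles whose dimensions record Jordan block sizes. I would decompose the remaining generator into its $\mathrm{ad}\,H$-weight components and record, using the standard description of centralizers of nilpotents, whether its essential (nonproportional) part commutes with all of $\mathfrak s$ or instead generates nontrivial $\mathfrak s$-submodules. A useful preliminary observation is that $X_1$ and $X_2$ cannot both lie in a single $\sl(2,\R)$, since the commuting nilpotents of one $\sl(2,\R)$ are all proportional; hence there is genuinely a second direction probing the structure transverse to $\mathfrak s$.

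The core is then the dichotomy. If the transverse direction can be arranged to commute with $\mathfrak s$, it lies in a reductive centralizer and completes to a second copy of $\sl(2,\R)$ commuting with the first, so $\mathfrak u\subset\sl(2,\R)\oplus\sl(2,\R)$ and exponentiation gives the embedding into $SL(2,\R)\times SL(2,\R)$. If instead it generates standard and trivial $\mathrm{ad}\,\mathfrak s$-representations in the nilpotent directions, the generated algebra takes the explicit block-upper-triangular form with diagonal blocks of sizes $2,1,1$,
\[
\begin{pmatrix} A & u & v\\ 0 & 1 & w\\ 0 & 0 & 1\end{pmatrix},\qquad A\in SL(2,\R),
\]
whose unipotent radical is the two-step nilpotent group with bracket $[u,w]=v$, on which $A$ acts by the standard representation on $u$ and on $v$ and trivially on $w$; this group is exactly $(SL(2,\R)\ltimes\R^2)\ltimes\R^3$, and I would place $X_1,X_2$ into its Lie algebra. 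All smaller degenerate possibilities (for instance $SL(2,\R)\ltimes\R^2$) embed into this model with the $\R^3$-factor trivial, so the two listed groups suffice.

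The step I expect to be the main obstacle is twofold: choosing the correct $\mathfrak s$ and making the dichotomy exhaustive and uniform in $n$. The choice of $\mathfrak s$ is delicate because the naive principal triple of a generic combination is often the \emph{wrong} one — for pairs of the form $(N,N^2)$ the relevant reductive piece is a much smaller $\sl(2,\R)$ — so part of the work is to select $\mathfrak s$ so that the nilpotent part of the pair generates only standard and trivial modules rather than higher ones. Moreover, for large $n$ a commuting nilpotent pair can have an intricate joint module structure, so the real content is to show that, after discarding the $\mathfrak s$-summands on which $\mathfrak u$ acts trivially or redundantly, the surviving essential part always reduces to one of the two normal forms, and that the Lie algebra generated by $X_1,X_2$ together with the Jacobson--Morozov elements closes up to exactly the six- or eight-dimensional model rather than to something larger. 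I would carry this out by combining the $H$-weight constraints with the centralizer description to peel off trivial summands, using the structural reductions of Section~\ref{prel} to pass to the essential pair, and then verifying closure directly in the explicit matrix realizations, where the semidirect-product structure of both targets is transparent.
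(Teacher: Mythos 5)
Your proposal takes a genuinely different and more ambitious route than the paper. The paper's proof does not treat an arbitrary commuting nilpotent pair: it assumes from the outset that the two generators are root vectors $u_\alpha, u_\beta$ for roots $\alpha,\beta$ of $\sl(n,\R)$, sets $X=[u_\alpha,u_{-\alpha}]$, and runs a short case analysis on whether $\beta-\alpha$ is a root, reading off the required commutator relations directly. Your plan instead attacks the general pair via Jacobson--Morozov and the $\mathrm{ad}\,\mathfrak s$-module structure of the centralizer, which, if completed, would cover strictly more ground.

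However, as written the proposal has a genuine gap, and it is exactly the one you flag yourself without resolving: the exhaustiveness of the dichotomy. You assert that after a suitable choice of $\mathfrak s$ the transverse generator either centralizes $\mathfrak s$ (giving $\sl(2,\R)\oplus\sl(2,\R)$) or ``generates standard and trivial $\mathrm{ad}\,\mathfrak s$-representations in the nilpotent directions'' (giving the $(SL(2,\R)\ltimes\R^2)\ltimes\R^3$ model). Nothing in the argument rules out the transverse generator spanning higher-dimensional irreducible $\mathfrak s$-modules. Your own example makes the point: for the pair $(N,N^2)$ with $N$ regular nilpotent in $\sl(5,\R)$, the element $N^2$ has $\mathrm{ad}\,H$-weight $4$ for the principal triple of $N$ and generates a five-dimensional irreducible, so neither branch of the dichotomy applies; the proposed fix (``select a smaller $\sl(2,\R)$'') is named but never constructed, and it is not shown that such a choice always exists, nor that the algebra generated by $X_1$, $X_2$ and the chosen triple closes up to one of the two six- or eight-dimensional models rather than something larger. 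Since placing the pair into one of the two listed groups is precisely the content of the lemma, the proposal in its current form is a strategy outline with the decisive step missing. To close it along the paper's lines, you would first need the reduction that lets one take both generators to be root vectors (or supply the classification of commuting nilpotent pairs that your dichotomy implicitly requires), after which the verification becomes the elementary bracket computation the paper carries out.
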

\begin{proof}
Let $\Delta$ be the root system for $\sl(n, \R)$.
Let $\{u_\gamma\}_{\gamma\in \Delta}$ be elements of the root spaces
associated to $\Delta$ such that for any $\gamma_1, \gamma_2 \in \Delta$,
$$
[u_{\gamma_1}, u_{\gamma_2}]  =
\left\{ \begin{aligned}
& 0 &  \text{ if } \gamma_1 + \gamma_2 \notin \Delta\,; \\
& u_{\gamma_1+ \gamma_2} & \text{ otherwise}\,.
\end{aligned}
\right.
$$
Now by assumption on $\rho$, there are commuting nilpotent elements
 $u_\alpha$ and $u_\beta$ in  $\{u_\gamma\}_{\gamma\in \Delta}$
 such that for all $(t, s) \in \R^2$
\[
\rho(t, s) =  \exp(t u_\alpha)\exp(s u_\beta)\,.
\]

Let $X = [u_\alpha, u_{-\alpha}]$.  Because $[u_\alpha, u_\beta] = 0$,
 the Jacobi identity and gives
\begin{equation}\label{equa:jacobi_ident}
[u_\beta, X] = [u_\alpha, [u_\beta, u_{-\alpha}]]\,.
\end{equation}
There are two cases.
If $\beta - \alpha \in \Delta$,
then
\[
\eqref{equa:jacobi_ident}  = - [ u_\alpha, u_{\beta-\alpha}] = - u_\beta\,.
\]
We note that $(\beta - \alpha)+\beta\notin \Delta$, which gives 
\begin{align*}
 [ u_\beta, u_{\beta-\alpha}]=0. 
\end{align*}
Further, by using the Jacobi identity again, we have 
\begin{align*}
 [u_{\beta-\alpha}, X]=[u_{\beta-\alpha}, [u_\alpha, u_{-\alpha}]]=u_{\beta-\alpha}.
\end{align*}
Then all the commutators relations listed above show that $\rho$ embeds into $SL(2, \R) \ltimes \R^2$.
\color{black}

As $n \geq 4$,
there is an $\R^3$ action on $SL(2, \R) \ltimes \R^2$
such that $SL(2, \R) \ltimes \R^2$ is a subgroup of $(SL(2, \R) \ltimes \R^2) \ltimes \R^3$ in $SL(n, \R)$.
So $\rho$ embeds into $(SL(2, \R) \ltimes \R^2) \ltimes \R^3$.

If $\beta - \alpha \notin \Delta$, then
$[u_\beta, u_{-\alpha}] = 0$, so
\[
[u_\beta, X] = 0\,.
\]
And because $[u_\alpha, u_\beta] = 0$,
we have $-\alpha - \beta \notin \Delta$,
so the Jacobi identity gives
\[
[u_{-\beta}, [u_{\alpha}, u_{-\alpha}]] = - [u_{-\alpha}, [u_{-\beta}, u_\alpha]]\,.
\]
Then we get $-\beta + \alpha \notin \Delta$.
Hence,
\[
[u_{-\beta},X] = 0\,.
\]
It follows that $\rho$ embeds into a copy of $SL(2, \R) \times SL(2, \R)$.
\end{proof}

\subsubsection{$(SL(2, \mathbb R)\ltimes \mathbb R^2)\ltimes\RR^3$}\label{sect:semidirect}

Let $\mathcal H$ be a unitary representation of $(SL(2, \R) \ltimes \R^2)\ltimes\RR^3$.
We write the representations $\mathcal H_\sigma$ in the decomposition~\ref{eq:decompose_H}
as representations $\mathcal H_{t,r}$ of two parameters, which is realized by a unitarily equivalent model $\mathcal K_{t,r}$, see Section~4.3 of \cite{W} for a discussion.
We provide it here for the convenience of the reader.

Let $\rho_{t,r} : (SL(2, \R) \ltimes \R^2)\ltimes\RR^3 \to \mathcal U(\mathcal K_{t,r})$ be the irreducible, unitary representation with parameter $t$ and $r$ defined as follows.
Let $f\in \mathcal K_{t,r}$, $g=\begin{pmatrix}a & b & u_1\\
c & d & u_2\\
0 & 0 & 1\\
 \end{pmatrix}$ and $v = \left(\begin{array}{ll} v_1 \\ v_2 \\ v_3\end{array}\right).$ Then
$$
\rho_{t,r}(g) f(x, y,z) := e^{(p_1r+p_2t)\sqrt{-1}}f(D,E,FD),
$$
$$
\rho_{t,r}(v) f(x, y,z):= e^{(xv_1-yv_2-zv_3)\sqrt{-1}}f(x,y,z),
$$
where
\begin{align*}
D&= -cy+xa,\qquad E=yd-bx\notag\\
F&=(azd-adu_1x-cybu_2+au_2yd+cu_1bx-czb)D^{-1}.
\end{align*}
and
$$
\| f \|_{\mathcal K_{t,r}} = \| f \|_{L^3(\R^3)}.
$$
Let $Y_1=\begin{pmatrix}1 \\
0\\
0\\
 \end{pmatrix}$, $Y_2=\begin{pmatrix}0 \\
1\\
0\\
 \end{pmatrix}$ and $Y_3=\begin{pmatrix}0 \\
0\\
1\\
 \end{pmatrix}$.
A basis for $\sl(2, \R)\ltimes\RR^2$ is
\begin{gather*}
X=\begin{pmatrix}1 & 0 & 0\\
0 & -1& 0\\
0 & 0 & 0\\
 \end{pmatrix}\quad
 U_1=\begin{pmatrix}0 & 1 & 0\\
0 & 0& 0\\
0 & 0 & 0\\
 \end{pmatrix}\quad
 U_2=\begin{pmatrix}0 & 0& 1 \\
0 & 0& 0\\
0 & 0& 0\\
 \end{pmatrix}\\
 U_3=\begin{pmatrix}0 & 0 & 0 \\
0 & 0& 1\\
0 & 0& 0\\
 \end{pmatrix}
 \quad V_1=\begin{pmatrix}0 & 0 & 0 \\
1 & 0& 0\\
0 & 0& 0\\
 \end{pmatrix}.
 \end{gather*}
As in \cite{W},
 \begin{gather}\label{for:1}
 X=x\partial_x-y\partial_y,\qquad U_1=-x\partial_y,\qquad U_2=-x\partial_z,\notag\\
  U_3=y\partial_z+\sqrt{-1}tx^{-1},\notag\\
  V_1=-y\partial_x+\sqrt{-1}(r+tz)x^{-2},\notag\\
  Y_1=x\sqrt{-1},\qquad Y_2=-y\sqrt{-1},\qquad Y_3=-z \sqrt{-1}.
 \end{gather}
Taking the Fourier transformation with respect to $y$ and $z$, we get the \emph{dual  models} $\widehat{\mathcal K_{t,r}}=\{f(x,\xi_1,\xi_2):f\in L^2(\RR^3)\}$. Computing derived representations, we get
 \begin{gather}\label{for:4}
 X=x\partial_x+\xi_1\partial_{\xi_1}+I,\qquad U_1=-x\xi_1\sqrt{-1},\qquad U_2=-x\xi_2\sqrt{-1},\notag\\
  U_3=y\xi_2\sqrt{-1}+\sqrt{-1}tx^{-1},\notag\\
  V_1=-y\partial_x+\sqrt{-1}(r+t\partial_{\xi_2}\sqrt{-1})x^{-2},\notag\\
  Y_1=x\sqrt{-1},\qquad Y_2=\partial_{\xi_1},\qquad Y_3=\partial_{\xi_2}.
 \end{gather}
Consider the $\R^2$ action $\rho: \R^2 \to \textrm{Diffeo}(M)$ whose derivatives along the coordinate axes of $\R^2$ induce  the vector fields $U_1$ and $U_2$.
%\marginpar{$\rho$ generate $U_1, U_2$?}
Let
\[
L_\rho := U_1^2 + U_2^2\,.
\]

\begin{prop}\label{prop:semidirect_NGH}
The operator $L_\rho$ is not globally hypoelliptic.
\end{prop}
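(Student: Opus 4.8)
The plan is to pass to the dual model $\widehat{\mathcal K_{t,r}}$ of \eqref{for:4}, in which the two generators of $\rho$ become \emph{multiplication} operators: $U_1 = -x\xi_1\sqrt{-1}$ and $U_2 = -x\xi_2\sqrt{-1}$. Consequently $L_\rho = U_1^2 + U_2^2$ acts as multiplication by the function $-x^2(\xi_1^2+\xi_2^2)$, which vanishes on the hyperplane $\{x=0\}$. Following the strategy announced at the start of this section, I would not try to invert $L_\rho$ directly; instead I would exhibit nonsmooth $\rho$-invariant distributions and invoke Proposition \ref{trivial_kernel} (via Remark \ref{smooth}): if $L_\rho$ were globally hypoelliptic, every $\rho$-invariant distribution would be smooth.

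First I would identify the invariant distributions. A distribution $D$ is $\rho$-invariant precisely when $U_1 D = U_2 D = 0$, i.e.\ when $x\xi_1 D = x\xi_2 D = 0$. Since these are multiplication operators, any such $D$ is supported on $\{x\xi_1 = 0\}\cap\{x\xi_2=0\} = \{x=0\}\cup\{\xi_1=\xi_2=0\}$. This immediately suggests the explicit family $D_g := \delta(x)\otimes g(\xi_1,\xi_2)$: because $x\delta(x)=0$ we get $x\xi_j D_g = \xi_j\,(x\delta(x))\otimes g = 0$, so every $D_g$ is $\rho$-invariant and lies in $\Ker L_\rho$.

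Next I would check that, for a suitable choice of $g$, the distribution $D_g$ is a genuine element of the distributional dual $\mathcal H^{-\infty}$ that is not smooth. Non-smoothness is clear: smooth vectors in $\mathcal H^\infty$ are, by elliptic regularity for $\triangle$, honest functions, whereas $D_g$ carries a Dirac mass in the $x$-variable, so $D_g\notin\mathcal H^\infty$. That $D_g$ pairs continuously against $\mathcal H^\infty$ is a trace estimate: the restriction $\phi\mapsto \phi(0,\cdot)$ to $\{x=0\}$ is continuous on smooth vectors, so integrating against a fixed $g\in L^2(\R^2)$ gives a bounded functional, of a fixed finite negative Sobolev order independent of $(t,r)$. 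Thus $D_g$ is a nonsmooth invariant distribution in each fiber $\widehat{\mathcal K_{t,r}}$.

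Finally, I would assemble these fiberwise distributions into a nonsmooth $\rho$-invariant distribution on $M=G/D$ using the direct integral decomposition \eqref{eq:decompose_H}: choosing $g=g_{t,r}$ measurably and with the fiberwise Sobolev order uniformly bounded, the field $(D_{g_{t,r}})$ integrates to an element of $\mathcal H^{-\infty}$ annihilated by $U_1$ and $U_2$, and it is nonsmooth because a smooth vector would have to be fiberwise smooth $\mu$-a.e. By Proposition \ref{trivial_kernel} this contradicts global hypoellipticity of $L_\rho$. I expect the main obstacle to be precisely this last transfer: one must ensure that enough representations $\mathcal H_{t,r}$ occur in $L^2(M)$ so that the fibered distribution is nonzero, that the field $(D_{g_{t,r}})$ is measurable with a uniform Sobolev bound so the integral converges in $\mathcal H^{-\infty}$, and that non-smoothness survives the passage from the fibers to $M$. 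The purely algebraic part — reducing $U_1, U_2$ to multiplication operators with a common zero set and writing down $\delta(x)\otimes g$ — is routine once the model \eqref{for:4} is in hand.
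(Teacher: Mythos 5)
Your overall strategy (exhibit a non-smooth $\rho$-invariant distribution and invoke Proposition \ref{trivial_kernel}) is legitimate and is indeed the route the paper takes in the $SL(2,\R)\times SL(2,\R)$ and mixed cases, but for this particular proposition the paper does something different and, crucially, avoids exactly the locus your construction lives on. The paper's proof works directly with the equation $L_\rho f=p$: it takes $p=x^2\xi_2^2h$ with $h$ a product of Schwartz functions chosen to vanish to infinite order at $x=0$ (namely $f_1(x)=e^{-x^{-2}}$ near $0$), divides to get the $L^2$ solution $f=-h\,\xi_2^2/(\xi_1^2+\xi_2^2)$, and checks that $Y_2f=\partial_{\xi_1}f$ fails to be in $L^2$ because of the singularity at $\xi_1=\xi_2=0$. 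The failure of smoothness is detected by $Y_2=\partial_{\xi_1}$, which \emph{is} one of the Lie algebra generators, so non-membership in $\mathcal H^1$ is an explicit divergent integral. In other words, the paper places the singularity on the component $\{\xi_1=\xi_2=0\}$ of the zero set of $x^2(\xi_1^2+\xi_2^2)$, not on $\{x=0\}$.

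The genuine gap in your proposal is the step ``the restriction $\phi\mapsto\phi(0,\cdot)$ is continuous on smooth vectors, so $\delta(x)\otimes g$ is a bounded functional of fixed finite negative Sobolev order.'' The Sobolev scale here is built from the Lie algebra operators in \eqref{for:4}, and none of them gives a nondegenerate $\partial_x$: the only $x$-derivatives available are $x\partial_x$ (inside $X$), which degenerates at $x=0$ and does not control a trace there (functions like $(\log(1/|x|))^{1/4}$ have $x\partial_x\phi$ bounded but blow up at $0$), and the $\partial_x$ hidden in $V_1$, which comes entangled with the singular multipliers $x^{-1},x^{-2}$ appearing in $U_3$ and $V_1$. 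Those singular multipliers cut both ways: they make the trace estimate unavailable by standard arguments, and they strongly suggest that smooth vectors must vanish to high (indeed infinite) order at $x=0$ — which is precisely why the paper's test vector carries the factor $e^{-x^{-2}}$. If smooth vectors do vanish at $x=0$, then $\delta(x)\otimes g$ is the \emph{zero} functional on $\mathcal H^\infty$, and your candidate invariant distribution disappears; if they do not, you still owe a proof that the trace functional is bounded in some $\mathcal H^{-s}$ and is nonzero on a smooth vector. Neither is addressed, and on top of this the assembly over the direct integral (measurability of $(D_{g_{t,r}})$, uniformity of the Sobolev order in $(t,r)$, and nontriviality on $L^2(M)$) is only flagged, not carried out. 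To repair the argument along the paper's lines, move the singularity to $\{\xi_1=\xi_2=0\}$, where the generators $Y_2,Y_3$ give you honest $\partial_{\xi_1},\partial_{\xi_2}$ control and the smoothness failure can be verified by a concrete integral.
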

\begin{proof}
Set $h(x,\xi_1,\xi_2)=f_1(x)f_2(\xi_2)f_3(\xi_2)$, where $f_i$, $1\leq i\leq3$ are Schwarz functions satisfying $f_1(x)=e^{-x^{-2}}$ if $\abs{x}\leq1$, $f_2(\xi_1)=1$, if $\abs{\xi_1}\leq1$ and $f_3(\xi_2)=1$, if $\abs{\xi_2}\leq1$. Also set $p=x^2\xi_2^2\cdot h$ Then $h$ and $p$ are smooth vectors in $\widehat{\mathcal K_{t,r}}$.

By relations in \eqref{for:4} for any $t,\,r\in\RR^2$ the equation $L_\alpha f=p$ has the form
\begin{align*}
  -x^2\xi_1^2f(x,\xi_1,\xi_2)-x^2\xi_2^2f(x,\xi_1,\xi_2)=p(x,\xi_1,\xi_2)
\end{align*}
in the dual model $\widehat{\mathcal K_{t,r}}$. Then
\begin{align}\label{for:3}
 f(x,\xi_1,\xi_2)=-\frac{p(x,\xi_1,\xi_2)}{x^2(\xi_1^2+\xi_2^2)}=-\frac{h(x,\xi_1,\xi_2)\cdot \xi_2^2}{\xi_1^2+\xi_2^2}.
\end{align}
It is clear that $f\in L^2(\RR^3)$. Hence $f\in\widehat{\mathcal K_{t,r}}$. Furthermore, we have
\begin{align*}
  Y_2f=-Y_2h\frac{\xi_2^2}{\xi_1^2+\xi_2^2}+2h\frac{\xi_2^2\xi_1}{(\xi_1^2+\xi_2^2)^2}.
\end{align*}
Since
\begin{align*}
  &\int_{\xi_1^2+\xi_2^2\leq1}\frac{\abs{h(x,\xi_1,\xi_2}^2)\cdot \xi_2^4\xi_1^2}{(\xi_1^2+\xi_2^2)^4}dxd\xi_1d\xi_2\\
  &=\int_{\xi_1^2+\xi_2^2\leq1}\frac{\abs{f_1(x)}^2\cdot \xi_2^4\xi_1^2}{(\xi_1^2+\xi_2^2)^4}dxd\xi_1d\xi_2\\
  &=\int\abs{f_1(x)}^2dx\cdot \int_{\xi_1^2+\xi_2^2\leq1}\frac{\xi_2^4\xi_1^2}{(\xi_1^2+\xi_2^2)^4}d\xi_1d\xi_2\\
  &>\infty,
\end{align*}
we see that $f$ is not a smooth vector in $\widehat{\mathcal K_{t,r}}$.
\end{proof}

\subsubsection{$SL(2, \R) \times SL(2, \R)$}

Let $\mathcal H$ be a representation space for $SL(2, \R)$.
The Lie algebra of $\sl(2, \R)$ has basis elements
\begin{equation}\label{equa:basis-sl(2, R)}
\begin{aligned}
U &= \left(\begin{array}{cc} 0 & 1 \\ 0 & 0 \end{array}\right)\,,
V &= \left(\begin{array}{cc} 0 & 0 \\ 1 & 0 \end{array}\right)\,,
X &= \left(\begin{array}{cc} 1/2 & 0 \\ 0 & -1/2 \end{array}\right)
\end{aligned}
\end{equation}
that respectively generate the upper and lower unipotent one-parameter
subgroups of $SL(2, \R)$ and the geodesic flow on $SL(2, \R)$.
Consequently, the Lie algebra of $SL(2, \R) \times SL(2, \R)$
is $\sl(2, \mathbb{R})\times \sl(2, \mathbb{R})$,
which has the basis
\begin{equation}\label{equa:basis-sl(2,R)^2}
\begin{array}{ccc}
&U_1 = U \times 0\,, & U_2 = 0 \times U\,, \\
&V_1 = V \times 0\,, & V_2 = 0 \times V\,, \\
&X_1 = X \times 0\,, & X_2 = 0 \times X\,.
\end{array}
\end{equation}
%The elements of $\sl(2, \mathbb{R})\times \sl(2, \mathbb{R})$ are
%skew-adjoint and span the set of all first order derivatives
%of functions in unitary representations of $SL(2, \R) \times SL(2, \R)$.

We take the operator $\triangle$ in \eqref{eq:laplace} to be
the Laplacian,
\[
\triangle := -X_1^2-1/2(U_1^2 + V_1^2) - X_2^2-1/2(U_2^2 +V_2^2)\,,
\]
and the measure space $(Z, \mu)$ is a product space, where $Z = \textrm{spec}(\Box_1) \times \textrm{spec}(\Box_2)$, and for $i = 1, 2$, $\Box_i$ is the Casimir operator for the $i_{\textrm{th}}$ component of $SL(2, \R)$,
\[
\Box_i := -X_i^2 - \frac{1}{2} (U_i V_i + V_i U_i)\,.
\]
In this way, the unitary representations $H_\sigma$ from \eqref{eq:laplace} are a direct sum of an at most countable collection of irreducible, unitary representations $H_\nu \otimes H_\theta$, where $(\nu, \theta) \in \textrm{spec}(\Box_1) \times \textrm{spect}(\Box_2)$.

As in the previous section, let $\rho$ be such that derivatives along the coordinate axes of $\R^2$ generate the vector fields $U_1$ and $U_2$.
Let
\[
L_\rho := U_1^2 + U_2^2\,.
\]

\begin{prop}\label{eq:sl2r2}
For any $n \geq 4$, $L_\rho$ is not globally hypoelliptic.
\end{prop}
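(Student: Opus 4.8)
The plan is to follow the same scheme as in Proposition~\ref{prop:semidirect_NGH}: inside a suitable irreducible component of the representation I would exhibit a smooth vector $p$ together with a solution $f$ of $L_\rho f = p$ that lies in $L^2$ but is \emph{not} a smooth vector; by Proposition~\ref{trivial_kernel} and Remark~\ref{smooth} this forces $L_\rho$ to fail global hypoellipticity. The representation $\mathcal H = L^2(M)$, restricted to the subgroup $SL(2,\R)\times SL(2,\R)$ into which $\rho$ embeds, decomposes as a direct integral of the factors $H_\nu\otimes H_\theta$ described above. I would carry out the construction in a single principal (or complementary) series factor in each of the two variables, since such factors occur in the decomposition. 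For these, each $SL(2,\R)$ admits the line model on $L^2(\R)$ in which the upper unipotent $U$ acts by $\partial_x$, the element $X$ by $2x\partial_x+c$, and the lower unipotent $V$ by $-x^2\partial_x-2sx$.

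First I would pass to the Fourier model, writing $\xi_1,\xi_2$ for the dual variables of the two factors. There $U_1=\sqrt{-1}\,\xi_1$ and $U_2=\sqrt{-1}\,\xi_2$ act by multiplication, so that
\[
L_\rho=U_1^2+U_2^2=-(\xi_1^2+\xi_2^2),
\]
while the lower unipotent of the first factor becomes the second order operator
\[
V_1=\sqrt{-1}\,(\xi_1\partial_{\xi_1}^2+c_0\,\partial_{\xi_1})
\]
for a constant $c_0$ depending only on the representation parameter. Fix a Schwartz function $h(\xi_1,\xi_2)=h_1(\xi_1)h_2(\xi_2)$ with $h\equiv 1$ on $\abs{\xi_1},\abs{\xi_2}\le 1$, and set $p:=\xi_2^2\,h=-U_2^2h$. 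Since $h$ is a smooth vector (polynomial-coefficient differential operators of the model preserve Schwartz space), so is $p$. Solving $L_\rho f=p$ then gives
\[
f(\xi_1,\xi_2)=-\frac{\xi_2^2\,h}{\xi_1^2+\xi_2^2},
\]
which is bounded, because $\xi_2^2\le\xi_1^2+\xi_2^2$, and hence lies in $L^2(\R^2)$; in particular $f$ is an honest distributional solution.

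It remains to show $f$ is not a smooth vector, for which it suffices that $V_1 f\notin L^2$. On the unit disk $h\equiv 1$, so there $f=-\xi_2^2/(\xi_1^2+\xi_2^2)$ is homogeneous of degree $0$; since $V_1$ lowers degree by one, a direct computation in polar coordinates $\xi_1=r\cos\phi,\ \xi_2=r\sin\phi$ yields, on that disk,
\[
V_1 f=\frac{C(\phi)}{r},\qquad C(\phi)=2\sqrt{-1}\,\cos\phi\,\sin^2\phi\,((1+c_0)-4\cos^2\phi).
\]
The angular factor $C(\phi)$ is a nonzero trigonometric polynomial for every value of $c_0$, because $(1+c_0)-4\cos^2\phi$ is a nonconstant function of $\phi$ and $\cos\phi\,\sin^2\phi$ vanishes only on a finite set. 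Hence $\int_0^{2\pi}\abs{C(\phi)}^2\,d\phi>0$ and
\[
\int_{r\le 1}\abs{V_1 f}^2\,d\xi_1\,d\xi_2=\int_0^{2\pi}\abs{C(\phi)}^2\,d\phi\,\int_0^1\frac{dr}{r}=\infty,
\]
so $V_1 f\notin L^2$ and $f$ is not smooth. Note that the diagonal element $X_1\sim\xi_1\partial_{\xi_1}$ would \emph{not} detect the singularity, since its $\xi_1$ prefactor renders $X_1 f$ bounded; this is why the genuinely second order operator $V_1$ is needed (the analogue of the pure derivative $Y_2=\partial_{\xi_1}$ available in the semidirect case of Proposition~\ref{prop:semidirect_NGH}).

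I expect two delicate points. The first is the possible cancellation between the two contributions to $V_1 f$ coming from the leading term $\xi_1\partial_{\xi_1}^2$ and from the lower order term $c_0\partial_{\xi_1}$, each individually of size $1/r$: the computation above shows they do not cancel, and, crucially, that the conclusion is uniform in $c_0$, so the construction runs in every relevant fiber regardless of the representation parameter. The second, and main, obstacle is ensuring that this abstract non-smooth solution descends to a genuine non-smooth distributional solution on $M=G/D$ with smooth right hand side: one must guarantee that the principal (or complementary) series factors used above actually occur in the restriction of $L^2(G/D)$ to $SL(2,\R)\times SL(2,\R)$, and that smoothness on $M$ of the vectors in question is already detected by smoothness under this subgroup, exactly as in the reduction underlying Proposition~\ref{prop:semidirect_NGH}.
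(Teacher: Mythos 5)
Your fiber-level computation is correct, but you are taking a genuinely different route from the paper's proof of this proposition. The paper never solves $L_\rho f=p$ here: it takes the Flaminio--Forni $U$-invariant distributions $\mathcal D_1,\mathcal D_2$ in each principal-series factor, forms $\mathcal D=\mathcal D_1\otimes\mathcal D_2$, and proves via the ladder basis $\{u_{j,1}\otimes u_{k,2}\}$ and the operator comparison $[(I+\triangle_1)(I+\triangle_2)]^k\le (I+\triangle)^{2k}$ that $\mathcal D\in\mathcal H^{-(3+\epsilon)}\setminus\mathcal H$ with $U_1\mathcal D=U_2\mathcal D=0$ (Lemmas \ref{lemm:SL2R^2} and \ref{lemm:D-regularity, SL2R^2}); the conclusion then comes from Proposition \ref{trivial_kernel} and Remark \ref{smooth}, since a GH action admits only smooth invariant distributions. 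You instead transplant the method of Proposition \ref{prop:semidirect_NGH}: in the Fourier picture of the line model $U_i$ acts by multiplication by $\sqrt{-1}\,\xi_i$, you divide by $\xi_1^2+\xi_2^2$, and you detect non-smoothness with the second-order operator $V_1$. The identity $V_1f=C(\phi)/r$ with $C\not\equiv 0$ for every $c_0$ checks out, and your remark that $X_1$ cannot detect the singularity (so that the genuinely second-order $V_1$ is forced, in contrast to the pure derivative $Y_2$ available in Proposition \ref{prop:semidirect_NGH}) is a real and correct observation. What each approach buys: yours is more elementary and self-contained at the level of a single fiber; the paper's is designed precisely to dispose of the step you yourself flag as the ``main obstacle'' and leave open.

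That step is a genuine gap in your argument as written. To contradict global hypoellipticity of $L_\rho$ on $M=G/D$ the right-hand side $p$ must be a smooth \emph{function on $M$}, i.e.\ a smooth vector for all of $SL(n,\R)$, not merely for the subgroup $SL(2,\R)\times SL(2,\R)$; your Schwartz datum in one line-model fiber is only subgroup-smooth, and the implication ``subgroup-smooth $\Rightarrow$ smooth on $M$'' is false (only the direction you need for $f$ -- non-smooth for the subgroup implies non-smooth on $M$ -- is automatic). In addition, a single fiber $H_\nu\otimes H_\theta$ of the direct integral is in general not a direct summand of $L^2(M)$, so producing an honest distributional solution on $M$ requires integrating your construction over a positive-measure set of spectral parameters with uniform control in $(\nu,\theta)$. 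The paper's route avoids both difficulties: an invariant functional of finite order with respect to the subgroup Sobolev norms automatically pairs with $C^\infty(M)\subset\mathcal H^\infty$, hence defines a non-smooth $\rho$-invariant distribution on $M$ outright, and Proposition \ref{trivial_kernel} finishes. To salvage your version you would need either to take $p$ in the image of $C^\infty(M)$ under the relevant spectral projection, or to convert your non-smooth solution into a non-smooth invariant distribution and fall back on the paper's criterion.
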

\begin{proof}
This will follow from the next lemma and the
decomposition~\eqref{equa:SL2R)^2-unitary_decomp}.

\begin{lemma}\label{lemm:SL2R^2}
Let $\mathcal H_1$ and $\mathcal H_2$
be irreducible, unitary
representations of $SL(2, \R)$ in the principal series,
and let $\mathcal H = \mathcal H_1 \otimes \mathcal H_2$.
For any $\epsilon > 0$,
there exists a distribution
$\mathcal D \in \mathcal H^{-(3+\epsilon)} \setminus \mathcal H$
that satisfies
$$
\begin{aligned}
U_1 \mathcal D & = 0\,,\\
U_2 \mathcal D & = 0\,.
\end{aligned}
$$
on $\mathcal H^{-(4+\epsilon)}.$
\end{lemma}
\begin{proof}
Let $\epsilon > 0$.
For each $i = 1,2$, let $\{u_{k, i}\}_{k}\subset \mathcal H_{i}^\infty$
be the orthonormal basis for $\mathcal H_{i}$ that is constructed
using the ladder operators $X\pm i(V+U)$.
For each $i=1,2$, formulas (24) and (43) of \cite{FF1}
give a $U$-invariant distribution
$\mathcal D_i \in \mathcal H_i^{-(1/2+\epsilon)}$.
Define
\[
\mathcal D := \mathcal D_1 \otimes \mathcal D_2\,.
\]

\begin{lemma}\label{lemm:D-regularity, SL2R^2}
For any $\epsilon > 0$, we have
\[
\mathcal D \in \mathcal H^{-(3+\epsilon)},
\]
and $D \notin \mathcal H$.
\end{lemma}
\begin{proof}
The Hilbert space $\mathcal H$ has the orthonormal basis
$\{u_{j, 1} \otimes u_{k, 2}\}_{j, k} \subset \mathcal H^{\infty}$
for $\mathcal H$.
Pick an arbitrary element
$f = \sum_{j, k\in \mathbb Z} f_{j,k} u_{j, 1} \otimes u_{k,2} \in \mathcal H^{3+8\epsilon}$, where $(f_{j, k}) \subset \C$.
Then there is a constant $C_\epsilon > 0$ such that
\begin{align}\label{equa:D(f)-u_j_otimes_u_k}
|\mathcal D(f)| & \leq \sum_{j, k} |f_{j, k}|  |\mathcal D(u_{j, 1} \otimes u_{k, 2})| \notag \\
& = \sum_{j, k} |f_{j, k}|  |\mathcal D_1(u_{j, 1}) \mathcal D_2(u_{k, 2})|  \notag \\
& \leq C_\epsilon \sum_{j, k} |f_{j, k}|\|u_{j, 1}\|_{1/2+\epsilon} \|u_{k, 2}\|_{1/2+\epsilon}\,.
\end{align}

Formula (35) of \cite{FF1} gives that for any $i =1,2$
and any $\omega > 0$,
$$
 \Vert u_{j, 1}\Vert_{\omega}
\approx (1 + j^2)^{\omega/2} \Vert u_{j, 1}\Vert_0 \,.% \otimes u_{k, 2}\Vert_0\,.
$$
Then by the Holder inequality
and because the operators $\triangle_1$ and
$\triangle_2$ commute, we have
\begin{align}\label{equa:D(f)-u_j_otimes_u_k-2}
\eqref{equa:D(f)-u_j_otimes_u_k} & = C_\epsilon %\left(\sum_{j, k} \frac{1}{(1+j^2)^{1+2\epsilon}(1+k^2)^{1+2\epsilon}}\right)^{-1/2}
\left( \sum_{j, k \in \mathbb Z}|f_{j, k}|^2 \|(1+j^2)^{1/2+\epsilon}u_{j, 1}\|_{1/2+\epsilon}^2 \|(1+k^2)^{1/2+\epsilon}u_{k, 2}\|_{1/2+\epsilon}^2\right)^{1/2} \notag \\
& \leq C_\epsilon \left(\sum_{j, k} |f_{j, k}|^2 \|[(I+\triangle_1)(I+\triangle_2)]^{3/4+2\epsilon}u_{j, 1}\otimes u_{k,2}\|_{0}^2\right)^{1/2} \,.
\end{align}

Observe that for any $k \in \N$,
\begin{equation}\label{equa:triangle_i-triangle}
[(I+\triangle_1)(I+\triangle_2)]^k \leq (I+\triangle)^{2k}\,.
\end{equation}
Because $(I+\triangle_1)(I+\triangle_2)$
and $(I+\triangle)$ are essentially self-adjoint operators,
the operators $[(I+\triangle_1)(I+\triangle_2)]^k, (I+\triangle)^k$
are defined for all $k \in \R^+$ by the spectral theorem.
Then by interpolation, it follows that \eqref{equa:triangle_i-triangle}
holds for all $k \geq 0$.

Then
\begin{align}
\eqref{equa:D(f)-u_j_otimes_u_k-2} & \leq C_\epsilon \left(\sum_{j, k \in \mathbb Z} |f_{j, k}|^2 \Vert(I+\triangle)^{3/2+4\epsilon}u_{j, 1}\otimes u_{k,2}\Vert_{0}^2 \right)^{1/2} \notag \\
& \leq C_{\epsilon} \left(\sum_{j, k} |f_{j, k}|^2  \|u_{j, 1} \otimes u_{k, 2}\|_{3+8\epsilon}^2 \right)^{1/2} \notag \\
& = C_{s, \epsilon} \| f \|_{3+8\epsilon} \notag\,.
\end{align}

We now observe that $\mathcal D \notin \mathcal H$.
Specifically, because $\mathcal D_1 \notin \mathcal H_1$, there exists a sequence of
functions $(f_k)_{k \in \N} \subset \mathcal H_1$ such that for any $k$,
$\mathcal D_1(f_k) = 1$ and $\Vert f_k \Vert_0 \geq k$.
Then let $g \in \mathcal H_2$ be such that $\mathcal D_2(g) = 1$, and note that for any $k$, $f_k \otimes g \in \mathcal H$.
Then from definitions,
\[
\mathcal D(f_k \otimes g)  = \mathcal D_1(f_k) \mathcal D_2(g) = 1 \,.
\]
However, for any $k \in \N$,
\[
\Vert f_k \otimes g \Vert_0 = \Vert f_k \Vert_0 \Vert g \Vert_0 \geq k \Vert g \Vert_0\,.
\]
Hence, $\mathcal D \notin \mathcal H$.
\end{proof}

Finally, let $f \in \mathcal H^{4+\epsilon}$,
so $U_1 f \in \mathcal H^{3+\epsilon}$.
Then using the regularity of $\mathcal D$ Lemma~\ref{lemm:D-regularity, SL2R^2},
we get
\begin{align}
  U_1 \mathcal D(f)  & =   \mathcal D(U_1 f)  \notag \\
& =   \mathcal D\left( \sum_{j, k} f_{j, k} U_1 (u_{j, 1} \otimes u_{k, 2})\right)  \notag \\
& =   \sum_{j, k} f_{j, k} \mathcal D((U u_{j, 1}) \otimes u_{k, 2})  \,. \label{equa:U_1D-bound}
\end{align}
By assumption, $\mathcal D_1$ is a $U$-invariant distribution.
So
\begin{align}
\eqref{equa:U_1D-bound}
& =   \sum_{j, k} f_{j, k} \mathcal D_1(U u_{j, 1}) \mathcal D_2(u_{k, 2})  \notag \\
& = 0\,. \notag
\end{align}
Because $\mathcal H^{4+\epsilon}$ is dense in $\mathcal H^{3+\epsilon}$,
we conclude that $U_1 \mathcal D = 0$.

The statement $U_2 \mathcal D = 0$ is proved analogously.
Combining this with Lemma~\ref{lemm:D-regularity, SL2R^2}
gives Lemma~\ref{lemm:SL2R^2}.
\end{proof}

This completes the proof of Proposition~\ref{eq:sl2r2}.
\end{proof}

\subsection{${\rho}$ has a partially hyperbolic and a quasi-unipotent generator}

Let $n \geq 4$ and $d +2 \leq n$.
Let $E_{l j} \in M_{n}(\R)$ be the matrix with a one in the $(l, j)$
entry and a zero everywhere else.
Then the sets
\[
\mathcal B_d := \{ E_{l l} - E_{j j} : l < j \}\,, \ \ \ \ \mathcal B_u := \left\{ E_{l j} : l \neq j\right\}
\]
are bases for the diagonal and unipotent vector fields in $\sl(n, \R)$, respectively.
Let $X \in \mathcal B_d$ and $U \in \mathcal B_u$ be commuting elements
of $\sl(n, \R)$.
Suppose that for any $t, s \in \R$, the $\R^2$ action $\rho$ on $SL(n, \R)$ is given by
\begin{equation}\label{eq:rho-embed}
\rho(t, s) x:= \exp(t X) \exp(s U)x
\end{equation}
It embeds in $SL(d, \R) \times SL(2, \R)$,
with $\exp(X) \in SL(d, \R)$ and $\exp(U) \in SL(2, \R)$.

\begin{thm}\label{thm:main_mixed}
For any irreducible, unitary representation $\mathcal H$
of $SL(d, \R) \times SL(2, \R)$,
there is a finite regularity distribution $D \notin \mathcal H$
such that
$$
\begin{aligned}
& \frac{\partial}{\partial t} \rho(t, s) D = 0\,, \\
& \frac{\partial}{\partial s}\rho(t, s) D = 0 \,.
\end{aligned}
$$
\end{thm}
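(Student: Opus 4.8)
The plan is to use that $\mathcal{H}$, being irreducible for the product of the two reductive (type I) groups $SL(d,\mathbb{R})$ and $SL(2,\mathbb{R})$, factors as an outer tensor product $\mathcal{H}=\mathcal{H}_1\otimes\mathcal{H}_2$, with $\mathcal{H}_1$ an irreducible unitary representation of $SL(d,\mathbb{R})$ and $\mathcal{H}_2$ one of $SL(2,\mathbb{R})$. Since $X\in\mathfrak{sl}(d,\mathbb{R})$ acts only on the first factor, $U\in\mathfrak{sl}(2,\mathbb{R})$ only on the second, and $X$ and $U$ commute, it is enough to produce a nonzero $X$-invariant distribution $\mathcal{D}_1\in\mathcal{H}_1^{-\infty}$ and a $U$-invariant distribution $\mathcal{D}_2\in\mathcal{H}_2^{-\infty}$, each of finite Sobolev order, and set $\mathcal{D}:=\mathcal{D}_1\otimes\mathcal{D}_2$. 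Then $X\mathcal{D}=(X\mathcal{D}_1)\otimes\mathcal{D}_2=0$ and $U\mathcal{D}=\mathcal{D}_1\otimes(U\mathcal{D}_2)=0$, so $\rho(t,s)\mathcal{D}=\mathcal{D}$ for all $(t,s)$ and both derivative conditions of the theorem hold. (The trivial representation is excluded, since there the only invariant distributions are smooth constants.)

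For the second factor I would obtain $\mathcal{D}_2$ exactly as in the proof of Lemma~\ref{lemm:SL2R^2}: the element $U$ generates a unipotent (horocycle) one-parameter subgroup, and formulas (24) and (43) of \cite{FF1} furnish a nonzero $U$-invariant distribution $\mathcal{D}_2\in\mathcal{H}_2^{-(1/2+\epsilon)}\setminus\mathcal{H}_2$ for every nontrivial $\mathcal{H}_2$ (principal, complementary, or discrete series), of finite order and not a smooth vector.

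The essential new point is the first factor. Here $X=E_{ll}-E_{jj}$ is an $\mathbb{R}$-split (partially hyperbolic) Cartan element which, together with the root vectors $E_{lj}$ and $E_{jl}$, spans a subalgebra $\mathfrak{s}\cong\mathfrak{sl}(2,\mathbb{R})$ inside which $X$ is a multiple of the geodesic generator. I would restrict $\mathcal{H}_1$ to the subgroup $S\cong SL(2,\mathbb{R})$ it generates and decompose it into a direct integral of irreducible $S$-representations. In each fiber, working in the line model, the geodesic flow $\exp(tX)$ admits explicit $X$-invariant homogeneous eigendistributions of type $|x|^{-(1\pm s)/2}$ (with $s$ the fiber parameter), which are not $L^2$ and have finite Sobolev order bounded uniformly in the fiber. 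Selecting these measurably and integrating over the decomposition, and checking that the resulting functional does not vanish, yields $\mathcal{D}_1\in\mathcal{H}_1^{-s_0}\setminus\mathcal{H}_1$ with $X\mathcal{D}_1=0$.

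Finally $\mathcal{D}=\mathcal{D}_1\otimes\mathcal{D}_2$ has finite order (the sum of the two) and is annihilated by $X$ and $U$. It is not a smooth vector: by the pairing argument of Lemma~\ref{lemm:D-regularity, SL2R^2}, the relation $\mathcal{D}\in\mathcal{H}$ would force both $\mathcal{D}_1\in\mathcal{H}_1$ and $\mathcal{D}_2\in\mathcal{H}_2$, whereas at least one of them is genuinely distributional (when $\mathcal{H}_2$ is nontrivial the non-smoothness comes from $\mathcal{D}_2$, when $\mathcal{H}_2$ is trivial from $\mathcal{D}_1$). Hence $\mathcal{D}\notin\mathcal{H}$, which is the required finite-regularity non-smooth invariant distribution. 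I expect the main obstacle to be the third step: producing $\mathcal{D}_1$ with a single finite Sobolev order valid across the entire direct-integral decomposition of $\mathcal{H}_1|_S$, together with the measurable selection and non-vanishing of the assembled functional.
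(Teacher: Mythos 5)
Your overall architecture coincides with the paper's: write $\mathcal H=\mathcal H_1\otimes\mathcal H_2$, take $D=\mathcal D_1\otimes\mathcal D_2$ with $\mathcal D_2$ the Flaminio--Forni $U$-invariant distribution on the $SL(2,\R)$ factor, check $X$- and $U$-invariance term by term on the orthogonal expansion, and get $D\notin\mathcal H$ from the distributional nature of one of the factors. That part, including the tensor-pairing regularity estimate, is essentially Proposition~\ref{prop:SL2R^2} and Lemma~\ref{lemm:D-regularity, mixed} of the paper.

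The gap is exactly where you place it yourself: the existence of a nonzero finite-order $X$-invariant distribution $\mathcal D_1$ on a nontrivial irreducible unitary representation of $SL(d,\R)$. This is the entire content of the paper's Proposition~\ref{prop:XD=0}, and your sketch does not carry it out. Your route --- restrict to the root $SL(2,\R)$ containing $X$ as the geodesic generator, decompose into a direct integral, and pick homogeneous eigendistributions fiberwise --- runs into two unresolved problems: (i) a measurable, consistently normalized choice among the (at least two-dimensional) space of $X$-eigendistributions in each fiber, with uniform bounds letting you integrate the field against smooth vectors of the ambient representation; and (ii) the non-vanishing of the assembled functional, which is not automatic since fiberwise nonzero distributions can integrate to zero against every globally smooth vector. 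The paper avoids the fiberwise eigendistribution construction altogether: it sets $\mathcal D_v(\omega)=\int_{\R}\langle\pi(tX)v,\omega\rangle\,dt$, obtains finite Sobolev order directly from the Kleinbock--Margulis exponential decay of matrix coefficients (Theorem 2.4.3 of \cite{KM}), gets $X\mathcal D_v=0$ from the fundamental theorem of calculus plus Howe--Moore, and proves non-vanishing by contradiction: if all $\mathcal D_v$ vanished, the cohomological equation $Xg=v$ would be solvable smoothly (Corollary~4.3 of \cite{W1}), which is ruled out by a Mackey-theory analysis of the restriction to the solvable subgroup $\R\ltimes\R$ generated by $X$ and a contracted root vector, together with a Mellin-transform obstruction (Lemma~\ref{le:3}). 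Without an argument of comparable substance for (i) and (ii), your construction of $\mathcal D_1$ is not complete, and the rest of the proof cannot start.
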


We begin by discussing the Sobolev unitary representation space of $SL(d, \R) \times SL(2, \R)$.
In this case the measure space $(Z, \mu)$ from the decomposition~\ref{eq:decompose_H} is a product space
and $\mathcal H_{\sigma}$ is a direct sum of an at most countable collection of irreducible, unitary representations of the form $H_\nu \otimes H_\theta$, where $H_\nu$, $H_\theta$ is a unitary representation of $SL(d, \R)$, $SL(2, \R)$, respectively.

Let $K$ be a maximal compact subgroup of $SL(d, \R)$. Denote by $\mathcal{K}$ its Lie algebra. Take an orthonormal basis $\{Y_j\}$ of $\mathcal{K}$, and set
\[
\triangle_1 := I-\sum Y_j^2\,.
\]
Then $\triangle_1$ belongs to the center of the universal enveloping algebra
of $\mathcal{K}$ and acts on smooth vectors of any representation space of $SL(d, \R)$.
Let
\[
\triangle_2 := I - X^2 - V^2 - U^2\,,
\]
where $X$, $V$ and $U$ respectively generate the geodesic and horocycle vector fields in $\sl(2, \R)$ and are given by formula~\ref{equa:basis-sl(2, R)}.

Now define the essentially self-adjoint operator $\triangle$ on $\mathcal H$ by
\[
\triangle := \triangle_1 + \triangle_2\,.
\]
\smallskip

Theorem~1.1 of \cite{FF1}
shows that there is a finite regularity, $U$-invariant distribution
in each irreducible, unitary representation of $SL(2, \R)$.
The next proposition shows that this is also true for the $X$ derivative
in irreducible, unitary representations of $SL(d, \R)$.

\begin{prop}\label{prop:XD=0}
For any non-trivial irreducible, unitary representation $(\pi, \mathcal H)$ of $SL(d, \R)$,
the following holds.
There is distribution $D \in \mathcal H^{-\alpha_1}$, where $\alpha_1\in \mathbb{N}$ depends only on $d$ when $d > 3$ and only on the spectral gap of $\pi$ when $d = 2$, such that
\[
X D = 0\,.
\]
\end{prop}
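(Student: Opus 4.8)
The plan is to reduce Proposition~\ref{prop:XD=0} to the geodesic-flow case on $SL(2,\R)$ and then to build the desired invariant distribution as a homogeneous boundary distribution whose Sobolev order is controlled by the spectral parameter of $\pi$. Since $X=E_{ll}-E_{jj}$ satisfies $[E_{lj},E_{jl}]=X$, the triple $\{E_{lj},E_{jl},X\}$ spans an $\sl(2,\R)$ subalgebra in which $X$ is the diagonal (geodesic) generator, up to the harmless reparametrization $X=2X_0$ with $X_0=\mathrm{diag}(1/2,-1/2)$ as in \eqref{equa:basis-sl(2, R)}. Let $G_0\cong SL(2,\R)$ be the corresponding subgroup. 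A distribution is $\exp(\R X)$-invariant if and only if it is invariant under the geodesic flow of $G_0$, so it suffices to produce a finite-order geodesic-invariant distribution after restricting $\mathcal H$ to $G_0$. First I would decompose $\mathcal H\!\mid_{G_0}$ into a direct integral $\int \mathcal H_\mu\, d\mu$ of irreducibles, parametrized by the Casimir eigenvalue $\mu$, construct an invariant distribution $D_\mu$ in each fiber with a uniform bound on its order, and then assemble the field $\{D_\mu\}$ into a single $D\in\mathcal H^{-\alpha_1}$.

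For the construction in a single irreducible $\mathcal H_\mu$, I would pass to the line (boundary) model of the principal or complementary series, where the geodesic subgroup $A=\exp(\R X_0)$ acts by the twisted dilation $f(x)\mapsto e^{ct}f(e^{-t}x)$, with the multiplier $c$ determined by $\mu$. The $A$-invariant distributions are then exactly the homogeneous boundary distributions $x_\pm^{\lambda}$ with $\lambda=-1-c$, supported at the source and sink of the geodesic flow on the boundary; these satisfy $X_0\,x_\pm^\lambda=0$ by homogeneity. Each such object has finite Sobolev order, $x_\pm^\lambda\in\mathcal H_\mu^{-s}$ for $s>-(\mathrm{Re}\,\lambda+1/2)$, and using the Sobolev estimates for the ladder basis in the spirit of formula~(35) of \cite{FF1} I would verify that $D_\mu:=x_+^\lambda$ lies in $\mathcal H_\mu^{-\alpha_1}\setminus\mathcal H_\mu$ for a finite integer $\alpha_1$ governed by $\mathrm{Re}\,\lambda$, that is, by $\mathrm{Re}\,\mu$. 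This is precisely the geodesic-flow analogue of the horocycle result quoted from \cite{FF1}; the discrete series, being tempered, present no regularity difficulty and can be treated via their realization as boundary values.

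The order $\alpha_1$ degrades only as $\mu$ approaches the trivial representation through the complementary series. For $d=2$ the representation $\pi$ is itself an $SL(2,\R)$-irreducible, so $\alpha_1$ is finite but depends on how close $\pi$ is to trivial, i.e.\ on its spectral gap, which matches the statement. For $d>3$ I would invoke the higher-rank spectral gap for $SL(d,\R)$ (a manifestation of property (T) together with the known bounds on the complementary-series parameter), which forces every fiber $\mathcal H_\mu$ in the restriction $\mathcal H\!\mid_{G_0}$ to stay uniformly away from the trivial range; hence $\alpha_1$ can be taken uniform and depending only on $d$. Finally I would assemble the fiberwise distributions into $D=\int D_\mu\, d\mu$, checking measurability of the field and that the uniform bound $\sup_\mu\|D_\mu\|_{-\alpha_1}<\infty$ makes $D$ a genuine element of $\mathcal H^{-\alpha_1}\setminus\mathcal H$ with $XD=0$.

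The hard part will be the quantitative regularity control rather than the existence of $D_\mu$: I expect the main obstacle to be establishing the \emph{uniform} spectral gap for the $G_0$-restriction in the higher-rank case, which is exactly what separates the ``$d>3$'' regime (uniform $\alpha_1$) from the ``$d=2$'' regime (dependence on the spectral gap), together with the verification that the assembled direct-integral object remains a finite-order distribution and does not collapse into an honest $L^2$ vector. Once $D$ is in hand, combining it with the $U$-invariant distribution supplied by Theorem~1.1 of \cite{FF1} on the $SL(2,\R)$ factor yields the $\rho$-invariant, non-smooth distribution needed for Theorem~\ref{thm:main_mixed}, and hence, via Proposition~\ref{trivial_kernel}, the failure of global hypoellipticity.
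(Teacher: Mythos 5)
Your route is genuinely different from the paper's, and it is worth recording what the paper actually does. The paper never restricts to an embedded $\sl(2,\R)$ containing $X$ as its Cartan. Instead it defines the candidate distribution directly as an orbit integral, $\mathcal D_v(\omega)=\int_{\R}\langle\pi(tX)v,\omega\rangle\,dt$, whose convergence and finite Sobolev order come from the uniform exponential decay of matrix coefficients (Theorem~2.4.3 of \cite{KM}); this is exactly where the dichotomy ``$\alpha_1$ depends only on $d$ in higher rank, only on the spectral gap when $d=2$'' enters. Invariance $X\mathcal D_v=0$ is then the fundamental theorem of calculus plus Howe--Moore. The substantial work in the paper is \emph{non-vanishing}: assuming $\mathcal D_v=0$ for all smooth $v$, Corollary~4.3 of \cite{W1} makes $Xg=v$ solvable in $\mathcal H^\infty$, and a contradiction is extracted by restricting to the two-dimensional solvable group $S$ generated by $X$ and a contracted root vector $\nu$, computing its irreducible representations by Mackey theory, and exhibiting a Mellin-transform obstruction $\mathcal M(\nu f,0)$ which solvability forces to vanish but which can be arranged to equal $1$. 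Your construction trades these difficulties: non-vanishing is essentially free (your fiberwise distributions are explicitly nonzero), but all of the burden moves to the fiberwise regularity bound and the direct-integral assembly.

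That is where the genuine gaps are. First, the claim that the $A$-invariant distributions in every fiber of $\mathcal H\!\mid_{G_0}$ have \emph{uniformly} bounded order is not established by the $x_\pm^{\lambda}$ computation. That computation is clean for the principal series (order $1/2+\epsilon$) and degrades controllably through the complementary series, but the restriction of $\pi$ to a root $SL(2,\R)$ can contain discrete series of unbounded parameter, and there the unitary norm is not the $L^2$ norm of the line model; the order of a homogeneous boundary distribution measured against the discrete-series Sobolev structure is not obviously bounded in the parameter (compare the horocycle case in \cite{FF1}, where the order of invariant distributions in the discrete series grows with the weight). Your sentence ``the discrete series, being tempered, present no regularity difficulty'' is an assertion, not an argument --- temperedness helps the orbit-integral construction (i.e., the paper's route), not the boundary-model one. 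Second, the assembly $D=\int D_\mu\,d\mu$ requires a measurable field of invariant distributions with a coherent normalization (each $D_\mu$ is defined only up to scalar and up to a choice in a possibly infinite-dimensional multiplicity space) satisfying $\int\|D_\mu\|_{-\alpha_1}^2\,d\mu<\infty$ with the integral nonzero; this is fixable by working over a finite-measure piece of the spectrum, but it is a step that must be carried out, not a remark. Finally, note that the uniform spectral gap for the restriction to a root $SL(2,\R)$ when $d\ge 3$, which you invoke to keep the complementary-series parameter away from the trivial range, is precisely the quantitative input \cite{KM} that the paper uses directly; your detour through the boundary model does not avoid it. In short, the skeleton is plausible and arguably more explicit than the paper's, but the uniform-order claim across the full $SL(2,\R)$-spectrum of the restriction, and the measurable assembly, are real gaps that need to be closed before the argument stands.
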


\begin{proof}
Since $X$ is partially hyperbolic, there is a vector $\nu\in \mathfrak{sl}(d, \R)$ in the root space of $SL(d, \R)$ such that
\begin{align}\label{for:2}
 [X,\nu]=\lambda \nu,\qquad \lambda<0.
\end{align}
It is harmless to assume that $\lambda<-1$. Let $S$ denote the connected subgroup with Lie algebra generated by $X$ and $\nu$ and let $S_1$ be the root subgroup of $\nu$. By the Howe-Moore Ergodicity theorem, the restricted representation $\pi|_S$ on $S$ contains non-nontrivial $S_1$-fixed vectors. Next, we will use Mackey theory \cite{M} to compute irreducible representations of $S$ without non-nontrivial $S_1$-fixed vectors.

\begin{thm}\label{th:1}(Mackey theorem, see \cite[Ex 7.3.4]{Zimmer}, \cite[III.4.7]{Margulis}) Let $S$ be a locally compact second countable group and $\mathcal{N}$ be an abelian closed
normal subgroup of $S$. We define the natural action of $S$ on the group of characters $\widehat{\mathcal{N}}$ of the group $\mathcal{N}$ by setting
\begin{align*}
    (s\chi)(\mathfrak{n}):=\chi(s^{-1}\mathfrak{n}s),\qquad s\in S,\,\chi\in \widehat{\mathcal{N}}, \,\mathfrak{n}\in \mathcal{N}.
\end{align*}
Assume that every orbit $S\cdot \chi$, $\chi\in \widehat{\mathcal{N}}$ is locally closed in $\widehat{\mathcal{N}}$. Then for
any irreducible unitary representation $\pi$ of $S$, there is a point $\chi_0\in \widehat{\mathcal{N}}$ with $S_{\chi_0}$
its stabilizer in $S$, a measure $\mu$ on $\widehat{\mathcal{N}}$ and an irreducible unitary representation  $\sigma$ of $S_{\chi_0}$ such that
\begin{enumerate}
  \item $\pi=\text{Ind}_{S_{\chi_0}}^S(\sigma)$,
  \item $\sigma\mid_{\mathcal{N}}=(\dim)\chi_0$,
  \item $\pi(x)=\int_{\widehat{\mathcal{N}}}\chi(x)d\mu(\chi)$, for any $x\in \mathcal{N}$; and $\mu$ is ergodically supported on the orbit $S\cdot \chi_0$.
\end{enumerate}
\end{thm}

Let $\mathbb E$ be the Hilbert space of functions on $\R^+$ with norm
\[
 \|f\|_{\mathbb{E}}=\|f\|_{L^2(\R^+, r^{-1}dr)}\,.
\]

\begin{lemma}\label{le:1}
The irreducible representations of $S$ without non-trivial $S_1$-fixed vectors are all equivalent to the following representation:
  \begin{gather*}
\beta: S\rightarrow \mathcal{B}(\mathbb{E})\\
\beta(\exp(\log s\cdot X),0)f(r)=f(s^{-1}r),\quad \forall\,s>0\\
\beta(e,\exp(t\nu))f(r)=tr^{-\Delta}\sqrt{-1}f(r); \quad \forall\,t\in\RR\,,
 \end{gather*}
where $\Delta$ is given by \eqref{for:2}. Computing derived representations, we get
\begin{align*}
 X=-r\partial_r,\qquad \nu=-r^{-\Delta}\sqrt{-1}.
\end{align*}
\end{lemma}
\begin{proof}
We note that $S_1$ is a normal subgroup of $S$. The group action is defined by
\begin{align*}
  &\exp(\log s\cdot X)\big( \exp(\log u\cdot X),\, \exp(t\nu)\big)\exp(-\log s\cdot X)\\
  &=\big( \exp(\log u\cdot X),\, \exp(s^\Delta t\nu)\big),
\end{align*}
for any $s,\,u>0$ and $t\in\RR$. Hence the dual action of the group $\{\exp(\log s\cdot X)\}_{s>0}$ on $S_1$ ($S_1$ is isomorphic to $\RR$) has two orbits: the original and the complement of the origin. The first factors to a representation of the group $\{\exp(\log s\cdot X)\}_{s>0}$. This means that $S_1$ acts trivially. Then we proceed to the second, which gives us the result by Mackey theorem.
\end{proof}
Let $S=\RR\ltimes\RR^2$, with Lie algebra $\{X,u_1,\,u_2\}$ satisfying $[X,u_i]=\lambda_i$, $i=1,\,2$. Let $S_i$ denote the abelian subgroup with Lie algebra $u_i$, $i=1,\,2$.
\begin{lemma}
The irreducible representations of $S$ without non-trivial $S_1$ or $S_2$-fixed vectors are induced representations and parameterized by
$s_0\in \RR\backslash0$ and the group action is defined by:
  \begin{gather*}
\beta^\delta_{s_0}: S\rightarrow \mathcal{B}(\mathbb{E}^\delta_{s_0})\\
\beta^\delta_{s_0}(\exp(\log s\cdot X),0)f(r)=f(s^{-1}r) \\
\beta^\delta_{s_0}(e,\exp(t_1u_1+t_2u_2))f(r)=e^{\small\text{$\sqrt{-1} ((-1)^\delta t_1r^{-\lambda_1}+s_0t_2r^{-\lambda_2})$}}f(r),
\end{gather*}
for any $t_1,\,t_2\in\RR$ and $s>0$, where $\delta\in \{+,\,-\}$; and
\begin{align*}
 \|f\|_{\mathbb{E}^\delta_{s_0}}=\|f\|_{L^2(\R^+, \frac{1}{r}dr)}.
\end{align*}
Computing derived representations, we get
\begin{align*}
 X=-r\partial_r,\qquad u_1=(-1)^\delta r^{-\lambda_1}\sqrt{-1},\qquad u_2=s_0 r^{-\lambda_2}\sqrt{-1}.
\end{align*}
\end{lemma}
%By using this lemma, I can shows that
%\begin{lemma}
%Let $S$ be a solvable group and $(\pi, \mathcal{H})$ be a unitary representation of $S$. Let $X\in \text{Lie}(S)$ be a partially hyperbolic element, i.e., there exists
%$u_1,\,u_2\in \text{Lie}(S)$ with $[u_1,u_2]=0$ such that $[x,u_1]>0$, $[x,u_2]<0$. If $\pi$ has no $u_1$ or $u_2$-invariant vectors, then there exists non-trivial $X$-invariant distributions.
%\end{lemma}

Hence for $\pi|_S$, we have the direct integral decomposition: $\pi|_S=\int_Z \beta_zd\mu(z)$ and $\mathcal{H}=\int_Z \mathcal{H}_zd\mu(z)$ over some measure space $(Z,\mu)$ of irreducible unitary representations, i.e., $(\beta_z, \mathcal{H}_z)=(S,\mathbb{E})$.

For any function $f$ and any constant $c \in \C$, the Mellin transform is defined by
\[
\mathcal{M}(f,c)=\frac{1}{\sqrt{2\pi}}\int_0^\infty f(r) r^{c}dr\,.
\]
For $f\in \mathbb{E}$, we have
\begin{equation}\label{eq:mellin_unitary}
\|f\|^2_{\mathbb{E}} = \int_\RR |\mathcal{M}(f,0+s\sqrt{-1})|^2ds\,,
\end{equation}
and
\begin{align*}
\frac{d}{ds} \mathcal{M}(h,0+s\sqrt{-1}) = \sqrt{-1}\mathcal{M}(h(r)\cdot \log r,0+s\sqrt{-1})
\end{align*}
if $h(r)\cdot \log r\in \mathbb{E}$. Then we have
\begin{lemma}\label{le:3}
Suppose $f\in \mathbb{E}^2$ if the equation $Xg=\nu f$
has a solution $g\in \mathbb{E}^2$,
then
\[
\mathcal{M}(\nu f,0)=0\,.
\]
\end{lemma}

\begin{proof}
For any $f\in \mathbb{E}^2$, by \eqref{eq:mellin_unitary} and the Sobolev embedding theorem (with respect to the variable $s\in \R$),
we have
\begin{align*}
  |\mathcal{M}(\nu f,0+s\sqrt{-1})|_{C^0(\R)}< C(\|\nu f\|_{\mathbb{E}}+\|\nu f\cdot \log(r)\|_{\mathbb{E}}).
\end{align*}
Note that by assumption $\lambda<-1$, so $-1-2\lambda > 0$.
Then for any $0<\beta<-1-2\lambda$,
\begin{align}
 \|\nu f\cdot \log(r)\|_{\mathbb{E}}^2&\leq c_\beta\int_0^1 |\nu f(r)|^2 r^{-1-\beta}dr+C\int_1^\infty |\nu^2 f(r)|^2 r^{-1}dr \notag \\
 &=c_\beta\int_0^1 |f(r)|^2 r^{-\beta-1-2\lambda}dr+C\int_1^\infty |\nu^2 f(r)|^2 r^{-1}dr.  \label{eq:Mellin:sob:1}
\end{align}
Because $-\beta - 1- 2\lambda > 0$, we get
\[%\begin{align}%\label{for:1}
 |\mathcal{M}(\nu f,0+s\sqrt{-1})|_{C^0(\R)}< C(\|f\|_{\mathbb{E}}+\|\nu^2 f\|_{\mathbb{E}})
\]%\end{align}
Finally, a computation shows that for any $Xh\in \mathbb{E}$,
\[
\mathcal{M}(Xh,0+s\sqrt{-1})=s\sqrt{-1}\mathcal{M}(h,0+s\sqrt{-1})\,.
\]
Now by assumption, there is $g\in \mathbb{E}^2$ such that $Xg=\nu f$.
So
\[
\begin{aligned}
\mathcal{M}(\nu f, 0+s\sqrt{-1}) = s\sqrt{-1}\mathcal{M}(g,0+s\sqrt{-1})\,.
\end{aligned}
\]
Setting $s = 0$ completes the proof of the lemma.
\end{proof}

We use $\mathcal{W}_{2,S}\subset \mathcal{H}$ to denote the $2$-order Sobolev space with the norm respect to $S$. Then
\begin{lemma}\label{le:2}
$\mathcal{H}^\infty$ is dense in $\mathcal{W}_{2,S}$.
\end{lemma}
\begin{proof}
For any $v\in \mathcal{H}$ and $\varphi\in C^\infty_c(S)$, let
\begin{align*}
\pi(\varphi)(v)=\int_S\varphi(s)\pi(s)vds,
\end{align*}
where $dg$ is a left invariant Haar measure.

Let $\mathcal{U}$ be the closure of $\mathcal{H}^\infty$ in $\mathcal{W}_{2,S}$.  For any $\omega\in \mathcal{W}_{2,S}$, choose $v_n\in \mathcal{H}^\infty$ such that $v_n\to \omega$ in $\mathcal{H}$. Then
it is clear that $\pi(\varphi)(v_n)\to \pi(\varphi)(\omega)$ in $\mathcal{W}_{2,S}$ since $\varphi$ is $C_{c}^\infty$ on $S$. We note that $\pi(\varphi)(v_n)\in \mathcal{H}^\infty$ for all $n$. Letting $\varphi$ run through a $C^\infty_c(S)$
approximate identity, we get $\omega\in \mathcal{U}$.
\end{proof}
The following result follows directly from Theorem 2.4.3 of \cite{KM}.
\begin{thm}\label{th:2}
For any unitary representation $\pi$ of $SL(d, \R)$, there is a universal constant $B>0$, a positive integer $l$ (dependent only on $SL(d, \R)$ and a positive constant $a$ (dependent only on the spectral gap $\sigma$ and $X$), such that
\begin{align*}
|\langle\pi(tX)v_1,v_2\rangle|\leq Be^{-|t|a}\|\triangle_1^lv_1\|\cdot \|\triangle_1^lv_2\|
\end{align*}
for any smooth vectors $v_1$, $v_2$ in the representation space of $\pi$.
\end{thm}
Now we are ready to prove Proposition~\ref{prop:XD=0}.
For any $v\in \mathcal H^\infty$, define
 \begin{align*}
   \mathcal{D}_v(\omega)=\int_{\RR} \langle\pi(tX)v,\omega\rangle dt,\qquad \omega\in \mathcal H^\infty.
 \end{align*}
The above theorem implies that there is a constant $\alpha_1 \in \N$ such that $\mathcal{D}_v$ is a distribution in $\mathcal H^{-\alpha_1}$ with estimates
\begin{align}
 |\mathcal{D}_v(\omega)|\leq C_{v , d, X, \sigma} \|\triangle_1^{\alpha_1}\omega\|\,,
\end{align}
for some constant $C_{v, d, X, \sigma} > 0$.  By the fundamental theorem of calculus and the Howe-Moore Ergodicity theorem, we get $XD_v=0$.
Hence we proved the first part of Proposition~\ref{prop:XD=0}.

Next, we will show that there is $v\in \mathcal H^\infty$ such that $\mathcal{D}_v \neq 0$.
Assume to the contrary that $\mathcal{D}_v=0$ for any $v\in \mathcal H^\infty$. We recall the following result:
\begin{lemma}(Corollary~4.3 of \cite{W1})
Suppose $(\pi,\mathcal{H})$ is a unitary representation of $SL(d,\R)$ such that
$\pi$ has a spectral gap and $v\in \mathcal{H}^\infty$. If $\mathcal{D}_v=0$, then the equation $Xg=v$ has a solution $g\in \mathcal{H}^\infty$.
\end{lemma}
Hence the assumption that $\mathcal{D}_v=0$ for any $v\in \mathcal H^\infty$ implies that $Xg=v$ has a solution $g\in \mathcal{H}^\infty$ once $v\in \mathcal H^\infty$.

Recall that $(Z, \mu)$ is a measure space (see Lemma~\ref{le:1}).  Fix a finite measurable set $\mathcal{E}\subset Z$.  For each $z\in \mathcal{E}$, let $f_z=h$, where
  $h\in C_c^\infty(\RR)$ with support inside the interval $[1,2]$ satisfying $\mathcal{M}(\nu h,0)=1$. It is clear that $h\in \mathbb{E}^\infty$. Let
  \[
  \bar{h}=\int_Z \chi_\mathcal{E}(z)f_zd\mu(z)\,,
  \]
 where $\chi_\mathcal{E}$ is the characteristic function of $\mathcal{E}$. Then $\bar{h}\in \mathcal{W}_{2,S}$, and by Lemma~\ref{le:2}, there is $v_n\in \mathcal{H}^\infty$ such that $v_n\to \bar{h}$ in $\mathcal{W}_{2,S}$.

 By assumption, for every $n$, the equation $Xg_n=\nu v_n$
has a solution $g_n\in \mathcal H^\infty$.
 We have the decomposition of $v_n=\int_Z v_n(z)d\mu(z)$ for all $n$.
 Hence, for every $n$ and almost every $z\in \mathcal{E}$,
  we have $v_n(z)\to h$ in $\mathbb{E}^2$, and the equation $Xg_n(z)=\nu v_n(z)$ has a solution $g_n(z)\in \mathbb{E}^\infty$.  It follows from Lemma~\ref{le:3} that
$\mathcal{M}(\nu v_n(z),0)=0$.  Then using \eqref{for:1} and the fact that $\mathcal{M}$ is linear in its first variable, we have
  \begin{align*}
 1&=|\mathcal{M}(\nu v_n-\nu h,0)|\leq |\mathcal{M}(\nu v_n(z)-\nu h,0+s\sqrt{-1})|_{C^0(\R)}\\
 &< C(\|v_n(z)-h\|_{\mathbb{E}}+\|\nu^2 (v_n(z)-h )\|_{\mathbb{E}})\,.
\end{align*}
Noting that the right side goes to $0$ as $n\to \infty$, we have a contradiction.  This concludes the proof of Proposition~\ref{prop:XD=0}.
\end{proof}

\begin{prop}\label{prop:SL2R^2}
Let $\mathcal H = \mathcal H_1 \otimes \mathcal H_2$,
where $\mathcal H_1$ and $\mathcal H_2$ are irreducible, unitary representations
of $SL(d, \R)$ and $SL(2, \R)$, respectively.
There is a number $\alpha_2 := \alpha_2(\mathcal H_2) > 1/2$
and a distribution $D$ such that
$D \in \mathcal H^{-(\alpha_1+\alpha_2)} \setminus \mathcal H$
and
$$
\begin{aligned}
(X\times 0) D = (0\times U) D = 0\,.
\end{aligned}
$$
\end{prop}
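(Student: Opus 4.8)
The plan is to build the desired distribution $D$ on the tensor product $\mathcal H = \mathcal H_1 \otimes \mathcal H_2$ as a product $D := D_1 \otimes D_2$, where $D_1$ is the $X$-invariant distribution on $\mathcal H_1$ supplied by Proposition~\ref{prop:XD=0} and $D_2$ is the $U$-invariant distribution on $\mathcal H_2$ supplied by Theorem~1.1 of \cite{FF1}. Since $X \times 0$ acts only on the first factor and $0 \times U$ acts only on the second, the invariance equations $(X\times 0)D = 0$ and $(0 \times U)D = 0$ should follow formally from $X D_1 = 0$ and $U D_2 = 0$, exactly as in the $SL(2,\R)\times SL(2,\R)$ case treated in Lemma~\ref{lemm:SL2R^2}. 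So the two bulleted invariance conditions are the easy part.

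First I would record the regularity of each factor: Proposition~\ref{prop:XD=0} gives $D_1 \in \mathcal H_1^{-\alpha_1}$ with $\alpha_1 \in \N$ depending only on $d$ (or on the spectral gap when $d=2$), and \cite{FF1} gives $D_2 \in \mathcal H_2^{-\alpha_2}$ for some $\alpha_2 = \alpha_2(\mathcal H_2) > 1/2$. The main technical step is then to show that the tensor product lands in $\mathcal H^{-(\alpha_1+\alpha_2)}$. For this I would take the Laplacian $\triangle = \triangle_1 + \triangle_2$ defined just above the proposition, pick an orthonormal eigenbasis $\{e_{j,1}\}$ of $\mathcal H_1$ for $\triangle_1$ and $\{e_{k,2}\}$ of $\mathcal H_2$ for $\triangle_2$, and for an arbitrary $f = \sum_{j,k} f_{j,k}\, e_{j,1}\otimes e_{k,2} \in \mathcal H^{\alpha_1+\alpha_2}$ estimate
\[
|D(f)| \leq \sum_{j,k} |f_{j,k}|\, |D_1(e_{j,1})|\, |D_2(e_{k,2})|
\leq C \sum_{j,k} |f_{j,k}|\, \|e_{j,1}\|_{\alpha_1}\, \|e_{k,2}\|_{\alpha_2}\,,
\]
and then apply Cauchy--Schwarz together with the product bound $(I+\triangle_1)^{\alpha_1}(I+\triangle_2)^{\alpha_2} \leq (I+\triangle)^{\alpha_1+\alpha_2}$ (valid on the common eigenbasis since $\triangle_1,\triangle_2$ commute and $\triangle = \triangle_1 + \triangle_2$) to conclude $|D(f)| \leq C\|f\|_{\alpha_1+\alpha_2}$. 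This is the same mechanism as in Lemma~\ref{lemm:D-regularity, SL2R^2}, only with the two exponents no longer equal; the slightly subtle point is that the operator inequality must be verified at the level of the joint spectrum and then extended to real exponents by interpolation, exactly as in \eqref{equa:triangle_i-triangle}.

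To see that $D \notin \mathcal H$, I would argue as in Lemma~\ref{lemm:D-regularity, SL2R^2}: since $D_1 \notin \mathcal H_1$, there is a sequence $(f_k) \subset \mathcal H_1$ with $D_1(f_k)=1$ but $\|f_k\|_0 \to \infty$, and fixing $g \in \mathcal H_2$ with $D_2(g)=1$ gives $D(f_k \otimes g) = 1$ while $\|f_k \otimes g\|_0 = \|f_k\|_0\|g\|_0 \to \infty$, so $D$ is unbounded on $\mathcal H$. The main obstacle I anticipate is bookkeeping rather than conceptual: one must make sure the constants $\alpha_1,\alpha_2$ and the Sobolev norms are matched correctly so that the final regularity exponent is exactly $\alpha_1 + \alpha_2$ and that the invariance equations hold on the appropriate Sobolev space where $X\times 0$ and $0\times U$ raise the order by the right amount. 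In particular I would verify that $(X\times 0)$ and $(0\times U)$ each map $\mathcal H^{\alpha_1+\alpha_2+1}$ into $\mathcal H^{\alpha_1+\alpha_2}$ so that $D$ can be tested against them, and then pass to the density of smooth vectors to conclude $(X\times 0)D = (0\times U)D = 0$ globally, completing the proof of the proposition.
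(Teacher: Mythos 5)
Your overall strategy is the same as the paper's: set $D = D_1 \otimes D_2$ with $D_1$ the $X$-invariant distribution from Proposition~\ref{prop:XD=0} and $D_2$ the $U$-invariant distribution from Theorem~1.1 of \cite{FF1}, check regularity of the tensor product, show $D \notin \mathcal H$ by the unboundedness argument, and deduce the two invariance equations factorwise. The invariance and the non-membership parts of your argument are fine and match the paper.

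There is, however, a genuine gap in your regularity estimate. After bounding
$|D(f)| \leq C\sum_{j,k}|f_{j,k}|\,\|e_{j,1}\|_{\alpha_1}\|e_{k,2}\|_{\alpha_2}$,
you cannot pass to $C\|f\|_{\alpha_1+\alpha_2}$ by Cauchy--Schwarz alone: the Schwarz inequality applied to this double sum produces a factor $\bigl(\sum_{j,k}1\bigr)^{1/2}$, which diverges. To make the dual sum converge you must insert a summable weight $w_{j,k}^{-1}$, and absorbing $w_{j,k}$ into the Sobolev norm of $f$ costs additional derivatives beyond $\alpha_1+\alpha_2$. This is exactly why the paper's Lemma~\ref{lemm:D-regularity, SL2R^2} inserts factors $(1+j^2)^{1/2+\epsilon}$, and why in the mixed case the paper's Lemma~\ref{lemm:D-regularity, mixed} only proves $D \in \mathcal H^{-(\alpha_1+\alpha_2+1)}$, not $\mathcal H^{-(\alpha_1+\alpha_2)}$. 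Moreover, the paper's estimate deliberately expands $\omega = \sum_n \omega_n \otimes u_n$ only in the $K$-types of the $SL(2,\R)$ factor, keeping $\omega_n \in \mathcal H_1$ intact and applying $\mathcal D_v$ to $\omega_n$ directly; the single sum over $n$ is then controlled by inserting $n\cdot|n|^{-1}$ and using $\sum_n|n|^{-2} < \infty$, at the cost of one extra derivative in the second factor only. Your double expansion over an eigenbasis of both factors would force you to pay the summability cost in the $SL(d,\R)$ factor as well, where the lattice of $K$-types is higher-dimensional and a single extra derivative need not suffice. To repair your argument, either adopt the paper's one-sided expansion, or quantify the weight needed for the double sum and accept a correspondingly larger loss of regularity (the precise exponent is immaterial for the application, since only finite order is needed).
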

\begin{proof}
Let $\{u_{k}\}_{k}\subset \mathcal H_{2}^\infty$
be an orthonormal basis for the unitary representation space $SL(2,\R)$.
Let $D_1$ be as in Proposition~\ref{prop:XD=0}.  By Theorem~1.1 of \cite{FF1},
there is a number $\alpha_2 := \alpha_2(\mathcal H_2)$
and a $U$-invariant distribution $D_2$ such that
$D_2 \in \mathcal H_2^{-\alpha_2}$.  Define
\[
D := D_1 \otimes D_2\,.
\]

\begin{lemma}\label{lemm:D-regularity, mixed}
Let $\alpha_1$ be as in Proposition~\ref{prop:XD=0}.
Then
\[
D \in \mathcal H^{-(\alpha_1+\alpha_2+1)} \setminus \mathcal H\,.
\]
\end{lemma}
\begin{proof}
Let $u_k$ be an orthogonal basis of $\mathcal{H}_2$ and let $K$ be the maximal compact subgroup in $SL(2,\R)$. Then for any $\omega\in \mathcal{H}_1\otimes \mathcal{H}_2$ we can write $\omega=\sum_{n}\omega_n \otimes u_n$, where $\omega_n\in \mathcal{H}_1$.
So $\{\omega_n \otimes u_n\}_n$ is orthogonal in $\mathcal H$.
If $\omega\in \mathcal{H}^{\alpha_1+\alpha_2}$, then
\begin{align*}
 \sum_{n}\|\triangle_1^{\alpha_1}\omega_n\|^2
 \cdot \|n\cdot\triangle_2^{\alpha_2}u_n\|^2&= \|(\triangle^{\alpha_1}_1\otimes (K\triangle_2^{\alpha_2}))(\omega)\|^2\\
 & \leq \|\omega\|^2_{\alpha_1+\alpha_2+1}\,.
\end{align*}

Hence,
\begin{align*}
 |\mathcal{D}_v\otimes D_2(\omega)| &= |\sum_{n} \mathcal{D}_v(\omega_n) D_2(u_n)| \leq \sum_{n} C_v\|\triangle_1^{\alpha_1}\omega_n\|
 \cdot \|\triangle_2^{\alpha_2}u_n\| \\
 &=\sum_{n} C_v\|\triangle_1^{\alpha_1}\omega_n\|
 \cdot \|n\cdot\triangle_2^{\alpha_2}u_n\|\cdot |n|^{-1}\\
 &\leq C_v \big(\sum_{n}\|\triangle_1^{\alpha_1}\omega_n\|^2
 \cdot \|n\cdot\triangle_2^{\alpha_2}u_n\|^2\big)^\frac{1}{2}\cdot (\sum_{n} |n|^{-2})^\frac{1}{2}\\
 &\leq C_v\|\omega\|_{\alpha_1+\alpha_2+1}\,.
\end{align*}
\end{proof}

We can now finish the proof of Proposition~\ref{prop:SL2R^2}.
From the above lemma, it suffices to prove that $D$
vanishes on $(X \times 0)$ and $(0 \times U)$.
Let $f \in \mathcal H^\infty$.
Because $D_1$ is an $X$-invariant distribution,
we get
\begin{align}
 \vert(X\times 0) D(f)\vert
& = \vert D\left((X\times 0) \sum_{j, k} f_{j, k} u_{j, 1} \otimes u_{k, 2}\right)\vert \notag \\
& \leq \sum_{j, k} |f_{j, k}| |D\left((X u_{j, 1}) \otimes u_{k, 2}\right)| \notag \\
& = \sum_{j, k} |f_{j, k}| |D_1(X u_{j, 1}) D_2(u_{k, 2})| \notag \\
& = \sum_{j, k} |f_{j, k} |(X D_1) (u_{j, 1}) D_2(u_{k, 2})| \notag \\
& = 0\,. \notag
\end{align}
So $(X\times 0) D = 0$.

The statement for $(0 \times U)$ is proved analogously.
This completes the proof of Proposition~\ref{prop:SL2R^2}.
\end{proof}

\begin{proof}[Proof of Theorem~\ref{thm:main_mixed}]
This follows immediately from Proposition~\ref{prop:SL2R^2}.
\end{proof}

%\subsection{Both generators of action $\rho$ are partially hyperbolic}

%\subsection{$\rho$ has two partially hyperbolic generators}

%If the action is exponentially mixing (which is the case when $G$ is semisimple) then for given smooth function $f$, the following:
%$$D: g\mapsto <\sum_{l\in \mathbb Z} \sum_{k\in \mathbb Z} f(a^kb^l), g> $$ is an invariant distribution which is not necessarily smooth.

\section{Proof of Theorem \ref{main2}}\label{thm_main2}

As we already mentioned and discussed in Section \ref{discreteD}, without loss of generality we may assume that $D$ is discrete subgroup in $R$. In this section we assume $R$ is solvable, in which case by a theorem of Mostow \cite[Theorem E. 3]{S}, $R/D$ is of finite volume if and only if $R/D$ is compact.

%\subsection{Action $\rho$ has a quasi-unipotent ergodic generator}
If the action $\rho$ contains a quasiunipotent ergodic generator, then the group $R$ is of a special kind. Recall the following
\begin{definition}
A solvable group $R$ is a class (I) group if for every $g\in R$ the spectrum of $Ad(g)$ is contained in the unit circle.
\end{definition}

Since $\rho$ contains a quasi-unipotent ergodic flow we may now use the following result from \cite{FFRH}:

\begin{lemma}[Lemma 4.10 \cite{FFRH}]
If there is an ergodic and quasi-unipotent homogeneous flow on $R/D$, then $R$ is of class(I).
\end{lemma}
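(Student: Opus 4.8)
The statement asserts that $R$ is of class (I), i.e. that for every $h\in R$ the operator $\mathrm{Ad}(h)$ has spectrum on the unit circle. The plan is to reformulate this in terms of the real parts of the roots of the adjoint action and to show that the existence of an ergodic generator lying in the ``quasi-unipotent part'' forces that part to be all of $R$. Reduce first to $R$ simply connected. Let $\mathfrak n$ be the nilradical of $\mathfrak r=\mathrm{Lie}(R)$. Since $\mathfrak r/\mathfrak n$ is abelian we have $[\mathfrak r,\mathfrak r]\subseteq\mathfrak n$, and $N$ acts nilpotently on $\mathfrak n$, so every eigenvalue of $\mathrm{Ad}(h)$ of modulus $\neq 1$ already occurs on $\mathfrak n$; writing these moduli as $e^{\beta_j(h)}$ produces finitely many continuous homomorphisms $\beta_j:R\to\mathbb{R}$ (the real roots), each factoring through $R/N$. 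Set $R_1:=\bigcap_j\ker\beta_j$, a closed connected normal subgroup containing $N$. By construction $R$ is of class (I) precisely when $R_1=R$, and the quasi-unipotent generator $g_0$ of $\rho$ satisfies $\beta_j(g_0)=0$ for all $j$, so the whole one-parameter subgroup $\exp(\mathbb{R}g_0)$ lies in $R_1$.

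The dynamical engine is the following. The map $B:=(\beta_1,\dots,\beta_b):R\to\mathbb{R}^b$ (choosing $b$ independent roots) is a surjective homomorphism with kernel $R_1$, identifying $R/R_1$ with $\mathbb{R}^b$. Project $\rho$ to the compact homogeneous space $R/\overline{R_1D}$. Since $g_0\in R_1$, its image under $B$ is $0$, so the projected flow is the identity flow. A factor of an ergodic flow is ergodic, yet the identity flow on a positive-dimensional compact homogeneous space equipped with Haar measure is never ergodic. Hence $R/\overline{R_1D}$ must be a single point, i.e. $\overline{R_1D}=R$. Passing through $B$, this says exactly that the image $B(D)\subset\mathbb{R}^b$ of the lattice is \emph{dense}.

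It remains to show $b=0$, and this is the main obstacle. The claim is that $B(D)$ is a \emph{discrete} subgroup of $\mathbb{R}^b$, so that density forces $b=0$ and $R_1=R$; here the arithmetic of the lattice enters. By Mostow's structure theory for compact solvmanifolds one may choose coordinates in which $\mathrm{Ad}(d)$ preserves an integral lattice of $\mathfrak n$ for every $d\in D$; thus $\mathrm{Ad}(d)|_{\mathfrak n}\in GL(\dim\mathfrak n,\mathbb{Z})$ is unimodular and the numbers $e^{\beta_j(d)}$ are absolute values of algebraic-unit eigenvalues. The clean way to package this is to prove directly that $R_1D$ is closed, which is the solvmanifold analogue of the closedness statements already invoked in the excerpt (e.g. \cite[Lemma 2.2.8]{KSS} and the results of \cite{S}). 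Once $R_1D$ is closed, $B(D)$ is the image of a discrete group under $R\to R/R_1$ and is therefore a closed countable, hence discrete, subgroup of $\mathbb{R}^b$; compactness of $R/D$ makes it a lattice, so $R/\overline{R_1D}\cong\mathbb{T}^b$ would be a genuine $b$-torus whenever $b\geq1$, contradicting the ergodicity conclusion above.

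Consequently $b=0$: every real root vanishes, so every $\mathrm{Ad}(h)$ has unit-circle spectrum and $R$ is of class (I). The delicate part throughout is the closedness of $R_1D$ (equivalently the discreteness of $B(D)$ in the maximal $\mathbb{R}$-split quotient), which is exactly the structural input one must import from the theory of solvmanifolds; the dynamical half of the argument---projecting the ergodic flow to a quotient on which the quasi-unipotent generator acts trivially---is then immediate.
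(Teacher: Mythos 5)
First, a point of comparison: the paper does not prove this lemma at all --- it is imported verbatim as Lemma 4.10 of \cite{FFRH} --- so there is no in-paper argument to measure yours against. Judged on its own merits, your proof has the right architecture, and it is essentially the standard one. The dynamical half is complete and correct: the real roots $\beta_j$ (logs of the eigenvalue moduli of $\mathrm{Ad}$) are homomorphisms $R\to\mathbb{R}$ factoring through $R/N$, the subgroup $R_1=\bigcap_j\ker\beta_j$ is closed, connected and normal, the quasi-unipotent generator lies in $R_1$, so the flow projects to the identity on $R/\overline{R_1D}$, and ergodicity forces $\overline{R_1D}=R$. The gap is in the arithmetic half: the claim that $B(D)$ is discrete in $\mathbb{R}^b$ (equivalently, that $R_1D$ is closed) is asserted and explicitly deferred to ``structure theory of solvmanifolds,'' but there is no off-the-shelf statement that hands you this, and your observation that each $e^{\beta_j(d)}$ is the modulus of an algebraic-unit eigenvalue does not by itself yield discreteness: moduli of algebraic units can a priori accumulate at $1$, and since $B$ factors through $R/N\cong\mathbb{R}^m$ as a linear map applied to the lattice $DN/N\cong\mathbb{Z}^m$, the image of a lattice under a linear surjection is generically dense. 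This closedness is exactly where the content of the lemma lives, so as written the proof is incomplete.

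The step can be closed along the lines you gesture at, with one correction and one missing finiteness argument. By Mostow, $D\cap N$ is a lattice in the nilradical $N$; conjugation by $d\in D$ preserves $D\cap N$, hence induces automorphisms of the torsion-free abelian quotients of its lower central series, i.e. matrices $A_k(d)\in GL(m_k,\mathbb{Z})$. (Note that $\mathrm{Ad}(d)|_{\mathfrak n}$ itself is only rational in a Malcev basis, not integral, so one must pass to the graded pieces; since $\mathrm{Ad}(R)$ is trivial on $\mathfrak r/\mathfrak n$, the nontrivial eigenvalue moduli of $\mathrm{Ad}(d)$ are exactly those of the $A_k(d)$.) If $B(D)$ were not discrete, taking differences would produce $d_i\in D$ with $0<\|B(d_i)\|\to 0$; since the chosen $\beta_1,\dots,\beta_b$ span all real roots, every eigenvalue of every $A_k(d_i)$ then has modulus in $[1/2,2]$ for large $i$. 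But an integer $m_k\times m_k$ matrix with all eigenvalues of modulus at most $2$ has characteristic-polynomial coefficients bounded by $\binom{m_k}{j}2^{m_k}$, so only finitely many characteristic polynomials, hence only finitely many possible values of $B(d_i)$ occur among such $d_i$ --- and a finite set of nonzero vectors cannot accumulate at $0$. This gives discreteness, hence $R/R_1D\cong\mathbb{T}^b$, and your ergodicity argument then forces $b=0$, i.e. $R$ is of class (I). With this paragraph supplied, your proof is complete and is, as far as I can tell, the same route taken in \cite{FFRH}.
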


This in particular implies that all the elements of the action $\rho$ are  quasi-unipotent.

\begin{thm} [Theorem 4.9 in \cite{FFRH}, \cite{S}, \cite{A}]Any ergodic homogeneous flow on a class (I) compact solvable manifold $R/D$ is smoothly conjugate to a homogeneous flow on a compact nilmanifold.
\end{thm}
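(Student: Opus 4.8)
The plan is to deduce the statement from the nilshadow theory for class (I) solvable groups, following Auslander and the treatment in \cite{S}. The results recalled above already supply the algebraic setting: since $\rho$ contains an ergodic quasi-unipotent generator, $R$ is of class (I) and every element of $\rho$ is quasi-unipotent, while Mostow's theorem lets us assume $R/D$ is compact and $D$ is a cocompact lattice in the simply connected solvable group $R$. The goal is then to produce a nilpotent group sharing the underlying manifold, a lattice in it, and a smooth conjugacy carrying the given flow to a translation nilflow.

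First I would construct the nilshadow $N$ of $R$. Working in the Lie algebra $\mathfrak{r}$, write the Jordan decomposition $ad(X) = S(X) + \mathcal{N}(X)$ into commuting semisimple and nilpotent parts. The class (I) hypothesis forces every $S(X)$ to have purely imaginary eigenvalues, and the semisimple parts are derivations that integrate to a relatively compact group, whose closure is a torus $T$ of automorphisms of $\mathfrak{r}$. Twisting the original bracket by this torus action produces a nilpotent Lie algebra $\mathfrak{n}$ on the same underlying vector space; the associated simply connected nilpotent group $N$ is the nilshadow, sharing the underlying manifold of $R$. The content to verify here is that $D$ remains a lattice in $N$, which follows from the compatibility of $D$ with $T$ (the semisimple parts attached to $\log D$ are algebraic, so $T$ respects the relevant rational structure), so that $N/D$ is a compact nilmanifold diffeomorphic to $R/D$.

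Next I would identify the conjugating map and the image flow. The identity on the common underlying manifold descends to a map $\Phi: R/D \to N/D$, which is a diffeomorphism and is smooth because, in exponential coordinates, the change of group law is polynomial. Let $X \in \mathfrak{r}$ generate the ergodic flow. Because $X$ is quasi-unipotent, its semisimple part $S(X)$ is precisely the piece absorbed into the torus twist, so under $\Phi$ the $R$-translation flow $\exp_R(tX)$ corresponds to the $N$-translation flow generated by the nilpotent remainder on $N/D$, which is again a homogeneous flow. Ergodicity transfers along the conjugacy, and by the standard ergodicity criterion for nilflows the resulting flow on $N/D$ is a genuine homogeneous nilflow.

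The hardest step will be showing that $\Phi$ actually intertwines the two flows, rather than merely being a diffeomorphism of the manifolds: one must check that the torus twist does not destroy equivariance, i.e.\ that the quasi-unipotent generator's orbit in $R$ maps to a one-parameter-subgroup orbit of $N$ modulo the lattice. This is exactly where class (I) is used in full strength, and it is the substance of Theorem~4.9 of \cite{FFRH} (see also \cite{A}); I would invoke that structural result to complete the identification.
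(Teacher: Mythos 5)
The first thing to say is that the paper does not prove this statement at all: it is imported verbatim as a known result, with the proof delegated to Theorem~4.9 of \cite{FFRH} and to \cite{S}, \cite{A}. So there is no internal argument to compare against. Your outline --- Jordan decomposition of $ad$, the torus $T$ generated by the semisimple parts, the twisted (nilshadow) bracket producing a nilpotent group $N$ on the same underlying manifold, and the identification of the quasi-unipotent flow with a translation flow on $N/D$ --- is indeed the standard strategy behind those references, so as a description of how the cited proof goes it is on target.

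As a proof, however, your proposal is circular at exactly the point you yourself flag as the hardest: you end by saying that the intertwining of the two flows ``is the substance of Theorem~4.9 of \cite{FFRH}\dots I would invoke that structural result to complete the identification.'' That is the statement you are supposed to be proving, so nothing has been established beyond the setup. The two steps you wave at are precisely where all the work lies: (i) that $D$ remains a lattice for the nilshadow group law --- your justification (``the semisimple parts attached to $\log D$ are algebraic, so $T$ respects the relevant rational structure'') is an assertion, not an argument, and in Auslander's treatment this requires constructing the nilshadow \emph{relative to} the lattice and controlling the closure of $Ad(D)_{ss}$; and (ii) that $\exp_R(tX)\,D$ is carried to the orbit of a genuine one-parameter subgroup of $N$ modulo $D$, which is where the class~(I) hypothesis and quasi-unipotence of $X$ must actually be used rather than merely mentioned. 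If the goal is a self-contained proof, these two points must be carried out; if the goal is to match the paper, the paper simply cites the result, and your sketch, minus the circular final appeal, is a fair summary of the cited argument.
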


The nilmanifold in the theorem above  does not depend on the ergodic flow itself, so the conjugacy from the above Theorem conjugates the whole action $\rho$ to a homogeneous $\mathbb R^k$ action on the nilmanifold. (The action is homogeneous from the fact that any flow commuting with the Diophantine flow on the nilmanifold is  also homogeneous.)

%\begin{lemma}[Lemma 4.12 \cite{FFRH}]
%Any finite order affine diffeomorphism of a nilmanifold which commutes with an ergodic flow is a translation by an element of the center.
%\end{lemma}

%This implies that the action $\rho$ is smoothly conjugate to an action by translations on a nilmanifold.

\medskip
% The data information below will be filled by AIMS editorial staff
%Received 28.06.2019; revised 31.01. 2020.
\medskip

\end{document}